\newtheorem{lemma}{Lemma}[section]
\newtheorem{theo}[lemma]{Theorem}
\newtheorem{corol}[lemma]{Corollary}
\begin{document}
\title[On self-similar singularity formation for the binormal flow]{On self-similar singularity formation for the binormal flow}
\author[A. Gu\'erin]{Anatole Gu\'erin}
    \address[A. Gu\'erin]{Universit\'e Paris-Saclay, Institut de Math\'ematiques d'Orsay (IMO), F-91405, France and Sorbonne Universit\'e, Laboratoire Jacques-Louis Lions (LJLL), F-75005 Paris, France} 
\email{anatole.guerin@universite-paris-saclay.fr}
\date\today
\maketitle

\begin{abstract}
The aim of this article is to establish a concise proof for a stability result of self-similar solutions of the binormal flow, in some more restrictive cases than in \cite{banica2015initial}. This equation, also known as the Local Induction Approximation, is a standard model for vortex filament dynamics, and its self-similar solution  describes the formation of a corner singularity on the filament. Our approach strongly uses the link that Hasimoto pointed out in 1972 between the solution of the binormal flow and the one of the 1-D cubic Schrödinger equation, as well as the existence results associated to the latter. 
\end{abstract}

\section{Introduction}

In this paper, we propose a new proof of the stability of self-similar solutions of the binormal flow
\begin{equation} \label{BF}
\chi_t=\chi_x \wedge \chi_{xx}.
\end{equation}
In terms of physics, $\chi(t,x)$ belongs to $\mathbb R^3$, $t$ represents the time and $x$ is the arclength variable. This equation was proposed in 1906 by DaRios in \cite{darios1906motion} and re-discovered in 1965 by Arms and Harma in \cite{arms1965localized}, for modeling a vortex filament dynamic under Euler equations.\\ 
In a few words, its formal derivation goes as follows. If we consider the velocity of an incompressible fluid $u$ and its vorticity $\omega$, the Biot-Savart law tells us that:
\[
u(t,x)=\displaystyle\int_{\mathbb{R}^3}\frac{(x-y)\wedge\omega(t,y)}{4\pi|x-y|^3}dy.
\]
Then, if we suppose that $\omega(t)$ belongs to a $1$D curve (i.e. $\omega=\Gamma\chi_x\delta_\chi$) with $\chi_x$ of norm $1$, we can write:
\[
u(t,x)=\displaystyle\int_{-\infty}^\infty\frac{(x-\chi(t,s))\wedge\omega(t,\chi(t,s))}{4\pi|x-\chi(t,s)|^3}ds.
\]
Conducting a Taylor expansion around zero on the space variable and restricting the domain of integration to $[-L,L]$ approximates the previous integral by:
\begin{align*}
u(t,0) \approx&\frac{\Gamma}{4\pi}\displaystyle\int_{-L}^L\frac{((x_1,x_2,0)-s\chi_s(t,0)-\frac{s^2}{2}\chi_{ss}(t,0))\wedge(\chi_s(t,0)+s\chi_{ss}(t,0))}{|(x_1,x_2,-s)|^3}ds\\
=&\frac{\Gamma}{4\pi}\frac{(-x_2,x_1,0)}{\epsilon^2}\displaystyle\int_{-\frac{L}{\epsilon}}^{\frac{L}{\epsilon}}\frac{ds}{(1+s^2)^\frac{3}{2}}+\frac{\Gamma}{4\pi}(x_1,x_2,0)\wedge\chi_{ss}(t,0)\displaystyle\int_{-L}^L\frac{s}{|\epsilon^2+s^2|^\frac{3}{2}}ds\\
&-\frac{\Gamma}{8\pi}\chi_s(t,0)\wedge\chi_{ss}(t,0)\displaystyle\int_{-\frac{L}{\epsilon}}^{\frac{L}{\epsilon}}\frac{s^2}{|1+s^2|^\frac{3}{2}}ds.
\end{align*}
The first term corresponds to a fluid rotating around a still vertical axis, the second term vanishes by a parity argument, and the third term gives us \eqref{BF}, after a time-renormalization. This model is sometimes called the Local Induction Approximation (LIA) or vortex filament equation (VFE), and is the subject of further discutions  in \cite{batchelor1967intro} , \cite{ricca1996contributions}  and more recently by Jerrard and Seis in \cite{jerrard2017vortex} with stronger assumptions but rigorous arguments.\\

In 1972, Hasimoto linked the solutions $\chi(t,x)$ of \eqref{BF} to solutions of a 1-D cubic Schrödinger equation by using the Frenet and parallel frames in \cite{hasimoto1972soliton} . This transformation is in the same spirit as the Mandelung transform. \\
Conversly, for a given real potential $a$ and a given solution $\psi$ of
\begin{equation}\label{NLS}
i\psi_t+\psi_{xx} +\frac12(|\psi|^2-a(t))\psi=0,
\end{equation}
the Hasimoto transformation is reversible by using Frenet frames for non vanishing curvatures vortices. However the calculations are much faster and work for any curvatures by constructing first parallel frames $(T,e_1,e_2)(t,x)$ that satisfy:
\begin{equation} \label{frame}
T_x=\mathcal{R}(\overline\psi N),\quad N_x=-\psi T,\quad T_t=\mathcal{I}(\overline{\psi_x}N), \quad N_t=-i\psi_x T-\frac i2(|\psi|^2-a(t)) N,
\end{equation}
with $N=e_1+ie_2$, and any orthonormal basis as initial data. It follows that the vector $T$ satisfies the 1-D Schrödinger map with values in $\mathbb S^2$:
\[
T_t=T\wedge T_{xx},
\]
and can be integrated into a solution $\chi$ of the binormal flow \eqref{BF} starting at a point $P$ at $(t_0,x_0)$ with the formula:
\[
\chi(t,x)=P+\int_{t_0}^t(T\wedge T_x) (\tau,x_0)d\tau+\int_{x_0}^xT(t,s)ds,\quad \forall (t,x).
\]

In this paper, we study the stability of the self-similar solutions $\{ \chi_\alpha \}_{\alpha>0}$ of \eqref{BF} determined for $t>0$ by a curvature of $\frac\alpha{\sqrt t}$ and a torsion of $\frac x{2t}$.The behaviour of $\chi_\alpha(t,s)$ for $t > 0$ was exhibited by physicists in \cite{LD} and \cite{LRT} and a numeric study on it was done in \cite{BU1}.  In \cite{Gut3}, it has been proven that they are solutions of \eqref{NLS}, smooth as long as $t>0$ and have a trace at $t=0$ forming a one corner polygonal line of angle $\theta$ such that 
\begin{equation}\label{angle}
\sin \frac\theta2=e^{-\pi\frac{\alpha^2}2}.
\end{equation}
This class of solutions correspond to solutions of 1-D cubic NLS solutions 
\[
\psi_\alpha(t,x)=\alpha\frac{e^{i\frac{x^2}{4t}}}{\sqrt t},
\]
 taking $a(t)=\frac{\alpha^2}t$ in \eqref{NLS}.
\begin{theo}[The initial value problem for the binormal flow]\label{maintheo}
Let $\chi_0$ a smooth arc-length parametrized curve of $\mathbb R^3$, except at one point located at arc-length $x=0$ where it forms a corner of angle $\theta$. 
Let $c$ be the curvature of $\chi_0$, $\tau$ its torsion and $\alpha$ given by \eqref{angle}. \\
If $\alpha$ defined from $\theta$ by \eqref{angle} is small enough, and if 
\[
c\in W^{3,1}\cap H^2,\quad \frac cx\in W^{2,1}\cap H^2, \quad x^2c\in W^{3,1}\cap H^2,\quad (1+x^2)c\in L^2, \quad x^{-2}c\in L^2, 
\]
\[
\tau \in H^2 \quad \text{and} \quad \tau^2\in H^1,
\] 
then there exists $t_0>0$ and
\begin{equation}
\chi(t,x) \in \mathcal C([-t_0,t_0],Lip)\cap\mathcal C([-t_0,t_0] \backslash \{ 0\} , \mathcal C^4),
\end{equation}
a solution of the binormal flow \eqref{BF} on $(0,t_0]$, having $\chi_0$ as a limit at time $t=0$, and there exists $C>0$ such that:
\begin{equation} \label{convchi}
\sup_x|\chi(t,x)-\chi_0(x)|\leq C\sqrt t.
\end{equation}
Moreover, the tangent vector $T=\partial_x \chi$ has a limit at time zero with the same time-decay rate:
\begin{equation} \label{convT}
\forall t>0 \quad \forall x\in \mathbb R \quad \exists C(x) \quad |T(t,x)-\partial_x\chi_0(t)|\leq C(x) t^\frac14.
\end{equation}
\end{theo}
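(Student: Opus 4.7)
The plan is to use Hasimoto's correspondence \eqref{frame} to reduce the theorem on $\chi$ to an initial-value result for the NLS \eqref{NLS} with $a(t)=\alpha^2/t$, constructed around the explicit self-similar solution $\psi_\alpha(t,x)=\alpha e^{ix^2/(4t)}/\sqrt t$. Concretely, I would seek a solution of the form $\psi=\psi_\alpha+v$ on $(0,t_0]$, derive an equation for $v$, construct $v$ with a prescribed trace at $t=0^+$ determined by the curvature and torsion of $\chi_0$, and finally integrate the parallel-frame ODEs \eqref{frame} to recover a binormal flow $\chi$ whose trace at $t=0$ is $\chi_0$.

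\textbf{Perturbation equation and the role of $\alpha$.} A key observation is that $|\psi_\alpha|^2=\alpha^2/t=a(t)$, so in fact $\psi_\alpha$ already solves the \emph{linear} Schr\"odinger equation $i\partial_t\psi_\alpha+\partial_x^2\psi_\alpha=0$. Substituting $\psi=\psi_\alpha+v$ into \eqref{NLS} and using this cancellation, $v$ satisfies
\begin{equation*}
iv_t+v_{xx}+\Re(\bar\psi_\alpha v)\,(\psi_\alpha+v)+\tfrac12|v|^2(\psi_\alpha+v)=0.
\end{equation*}
The dangerous term is the linear-in-$v$ potential $\Re(\bar\psi_\alpha v)\psi_\alpha$, of size $\alpha^2/t$, which is singular at $t=0$; but its oscillatory factor $e^{ix^2/(4t)}$ and the smallness of $\alpha$ will permit a contraction argument. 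For the data at $t=0$, the natural candidate is the filament function $v_0(x)=c(x)\,e^{i\int_0^x\tau(y)\,dy}$, which is smooth away from the corner and, thanks to the weighted-Sobolev hypotheses on $c$, $c/x$, $x^2c$ and on $\tau,\tau^2$, lies in a space compatible with the Schr\"odinger flow (with weights corresponding to derivatives of the pseudoconformal conjugate, i.e.\ the quantities that arise when estimating $xv$, $x^2 v$ and $v_x$).

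\textbf{Resolution via a pseudoconformal-type change of variables.} To turn the singular-at-zero problem into a standard Cauchy problem, I would apply a pseudoconformal substitution of the form
\begin{equation*}
v(t,x)=\frac{1}{\sqrt{t}}\,e^{ix^2/(4t)}\,\overline{w\!\left(\tfrac{1}{t},\tfrac{x}{t}\right)}
\end{equation*}
(or a close variant adapted to the $\alpha^2/t$ prefactor). Under this change of variable the linear singular potential reduces to a bounded perturbation of a standard cubic NLS for $w$ on a time interval $[1/t_0,+\infty)$, the prescribed trace $v_0$ at $t=0$ becoming the initial value of $w$ at $\tau=+\infty$ (a final-state problem), or conversely $w(1/t_0,\cdot)$ corresponds to the datum to propagate. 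A Picard fixed point in a suitable weighted space $H^s\cap L^2(x^{2k}dx)$ then yields a unique small $w$, hence $v$; the contraction closes precisely because $\alpha$ is small, this being the analytic avatar of the geometric fact that the corner angle is small. The main obstacle, and the reason one needs the combined $W^{k,1}\cap H^k$ weighted hypotheses, is propagating enough regularity and enough weights on $w$ simultaneously so that both $v$ and $v_x$ can be controlled uniformly in $t\in(0,t_0]$ despite the $e^{ix^2/(4t)}$ modulation.

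\textbf{Reconstruction of $\chi$ and convergence rates.} Once $\psi=\psi_\alpha+v$ is obtained, I would solve the linear transport ODEs \eqref{frame} for $(T,N)$ and integrate $\chi(t,x)=\chi(t_0,0)+\int_{t_0}^{t}(T\wedge T_x)(\tau,0)\,d\tau+\int_0^xT(t,s)\,ds$. The bound $\|T\wedge T_x\|_{L^\infty_x}\lesssim\|\psi\|_{L^\infty_x}\|\psi_x\|_{L^\infty_x}\lesssim 1/\sqrt t\cdot\|\psi_x\|_{L^\infty_x}$ integrates in $t$ to give $|\chi(t,x)-\chi_0(x)|\lesssim\sqrt t$, which is \eqref{convchi}. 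For \eqref{convT} I would use the second equation in \eqref{frame}, $T_t=\mathcal I(\bar\psi_x N)$: gaining one extra factor of $t^{1/4}$ from the improved time-integrability of $\psi_x$ when one also controls $xv$ (the pseudoconformal weight), one obtains the pointwise rate $|T(t,x)-\partial_x\chi_0(x)|\leq C(x)t^{1/4}$. The hardest quantitative step will be establishing this last time-regularity of $T$, as it requires the full set of weighted Sobolev bounds on the initial curvature listed in the theorem.
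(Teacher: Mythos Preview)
Your architecture matches the paper's: perturb the self-similar NLS solution, pass to pseudoconformal variables, invoke a wave-operator result for small $\alpha$, then run Hasimoto backwards. That part is fine. But the proposal has a genuine gap at the very point that carries most of the paper's work: you never explain why the curve $\chi$ you build actually has the \emph{prescribed} $\chi_0$ as its trace at $t=0$.

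Constructing $\psi$ with scattering data $v_0=c\,e^{i\int\tau}$ (or its pseudoconformal avatar) and then integrating \eqref{frame} gives you \emph{some} $\chi$ with \emph{some} limit as $t\to0$. Identifying that limit with $\chi_0$ is not automatic: the map from filament function to curve is only determined up to a rigid motion and depends on the initial frame chosen in \eqref{frame}. The paper devotes Sections 2.2--2.4 and 3 to this identification, and three ingredients are missing from your sketch. First, $N$ itself does not converge as $t\to0$; one must pass to the modulated normal $\tilde N=e^{i\alpha^2\log(|x|/\sqrt t)}N$, and correspondingly the scattering data must carry a logarithmic phase, $u_+=\mathcal F^{-1}\sqrt i\,(g(2\cdot)e^{i\alpha^2\log|\cdot|})$, not just $g=c\,e^{i\int\tau}$. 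Second, one has to prove that $(T(0,\cdot),\tilde N(0,\cdot))$ satisfies the \emph{same} parallel-frame ODE \eqref{ci} as $(T_0,N_0)$; this requires passing to the limit in $T_x=\Re(\bar\psi N)$ and $N_x=-\psi T$ and recognizing $\frac{1}{\sqrt t}e^{-ix^2/(4t)}u(\tfrac1t,\tfrac xt)\to \tfrac1{\sqrt i}\widehat{u_+}(x/2)$. Third, even with the ODE in hand you still need the initial conditions at $x=0^\pm$, i.e.\ you must show $T(0,0^\pm)$ point along the corner directions $A^\pm_\alpha$ of $\chi_0$. The paper does this by a self-similar-path/Arzel\`a--Ascoli argument comparing $(T,N)(t_n,x\sqrt{t_n})$ with the frame of $\chi_\alpha$; nothing in your outline addresses the corner.

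Two smaller points. Your bound $\|T\wedge T_x\|_{L^\infty}\lesssim\|\psi\|_{L^\infty}\|\psi_x\|_{L^\infty}$ is both wrong and unnecessary: since $|T|=1$ one has $|\chi_t|=|T\wedge T_x|=|T_x|=|\psi|\leq C/\sqrt t$, which integrates directly to \eqref{convchi}. And the $t^{1/4}$ rate for $T$ is not obtained from weighted estimates alone: the leading term in $\bar\psi_x$ is $-i\alpha x/(2t\sqrt t)\,e^{-ix^2/(4t)}$, which is not time-integrable; one must integrate by parts in $t$ using the oscillation of $e^{-ix^2/(4t)}$ and then control the resulting $N_t$ term, which is where the cancellation \eqref{cancel} and the decay of the remainder $r$ enter.
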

This type of result has already been proven by Banica and Vega in Theorem 1.2 of \cite{banica2015initial}, under weaker assumptions on the curvature and torsion of $\chi_0$. As a counterpart, the corresponding scattering results for \eqref{NLS} (existence of wave operator and asymptotic completeness) obtained in \cite{banica2012scattering} are with weaker decay. As a consequence, the proof  require to obtain asymptotic space states for $T(t,x)$ and $N(t,x)$ when $x\rightarrow \pm\infty$, and a much more technical iterative argument to obtain the limit for $T$ and $N$ at time $t=0$.\\
 In here, we will use stronger decay of the wave operator results in \cite{BV1} to give a concise proof of Theroem \ref{maintheo}.\par
 We note that even under more restrictive hypothesis than in \cite{banica2015initial}, we do not have an asymptotic completeness result with better decay, that would allow us to give also a concise proof of Theorem 1.3 of the second stability result in \cite{banica2015initial}.

Let us streamline here the constructive proof of Theorem \ref{maintheo}. Denoting $T_0$ the tangent vector to $\chi_0$, we define the complex valued functions $g\in \mathbb C$ and $N_0 \in\mathbb S^2+i\mathbb S^2 $ defined by the parallel frame system: 
\begin{equation}\label{ci}
\left \{ 
\begin{array}{rl}
  T_{0x}(x)=&\Re(g(x) N_0(x))  \\
 N_{0x}(x)= &-\overline g(x)T_0(x)  \\
\end{array}
 \right. ,
\end{equation}
with initial data $(A^+_{\alpha},B^+_{\alpha})$ for $x>0$ and $(A^-_{\alpha},B^-_{\alpha})$ for $x<0$, where $A^\pm_{\alpha}$ and $B^\pm_{\alpha}$ stand for the complex vectors appearing in the asymptotics of the normals vectors of the same self-similar solution $\chi_\alpha$ (see Theorem 1 of \cite{Gut3}).\\
Let us note that, using Frenet frame, there exists $\gamma\in[0,2\pi]$ such that:
\begin{equation} \label{uct}
g(x) = c(x)e^{i(\int_0^x\tau(s)ds+\gamma)},
\end{equation}
  as explained in Remark 2.1 of \cite{banica13}.\par
Now set:
\begin{equation}\label{defup}
u_+= \mathcal F^{-1} \sqrt i \left(g(2\cdot)e^{i\alpha^2 \log|\cdot|} \right).
\end{equation}
The hypothesis of Theorem \ref{maintheo} on $c$ and $\tau$ allow $u_+$ to belong to some particular Sobolev spaces in order to use the existence of a wave operator for \eqref{NLS} proved in Theorem 1.4 of \cite{BV1}. More precisely, $u_+$ is in  $\dot H^{-2} \cap H^2 \cap W^{2,1}$ and $\alpha$ is small, so there exists $t_0>0$ and a unique solution of \eqref{NLS} on $(0,t_0]$ of the form:
\begin{equation} \label{solnls}
\psi(t,x)=\frac{e^{i\frac{x^2}{4t}}}{\sqrt t}\left(\alpha+ \overline u\left(\frac1t,\frac xt\right) \right),
\end{equation}
with $u$ being a perturbation that writes:
\begin{equation}\label{defu}
u(t,x)=e^{it\partial_x^2} u_+(x) + r(t,x).
\end{equation}
The proof of this result uses scattering methods after performing a pseudo-conformal transformation, and allows us to have the following control on the time decay of the remainder term $r$, for $k=1$ and $k=2$:
\begin{equation}\label{borner}
\|r(t)\|_{L^2_x} =\mathcal O(t^{-\frac12}) \quad,\quad\|\nabla^kr(t)\|_{L^2_x}=\mathcal O(t^{-1}).
\end{equation}\par
The next step in our proof is to use the parallel frame \eqref{frame} with the function $\psi$ given by \eqref{solnls} to construct a solution $\chi$ of \eqref{BF} on $(0,t_0]$.\par
Then, we consider the vectors $T$ and $N$ given by \eqref{frame}, as well as $\tilde N$ a modulated version of $N$ defined later. We prove in section \ref{limit} that $T$ and $\tilde N$ admit a trace at time $t=0$, thanks to bounds on the perturbation $u$ given in Corollary \ref{bornu}, consequence of bound \eqref{borner}.\par
Then, in section \ref{moreinfo} we find the ODE system verified by $T_{|t=0}$ and $\tilde N_{|t=0}$ for $x\neq 0$ that turns out to be the same as the one of $T_0$ and $N_0$, due to \eqref{defup}. Sections \ref{limit} and \ref{moreinfo} are the part of the proof that simplify consistently the proof in \cite{banica2015initial}. \par
Finally, in section \ref{description}, we use self-similar paths to determine $T_{|t=0}$ and $\tilde N_{|t=0}$ at $x=0^+$ and $x=0^-$ for the ODE system, that coincides with the corner singularity directions of $\chi_0$ and complete the Cauchy Problem. These last results allows us to conclude in section \ref{fin} that we recovered $\chi_0$ at time $t=0$.

\section{Construction of perturbed self-similar solution of the binormal flow}
As announced in the introduction, we first define the complex-valued function $g$ with the system verified by $\chi_0$'s tangent and normal vectors $T_0$ and $N_0$:
\begin{equation}
\left \{ 
\begin{array}{rl}
  T_{0x}(x)=&\Re(g(x) N_0(x))  \\
 N_{0x}(x)= &-\overline g(x)T_0(x)  
\end{array}
 \right. ,
\end{equation}
with initial data $(A^+_{\alpha},B^+_{\alpha})$ for $x>0$ and $(A^-_{\alpha},B^-_{\alpha})$ for $x<0$,
and consider
\begin{equation} \label{up}
u_+= \mathcal F^{-1} \sqrt i \left(g(2\cdot)e^{i\alpha^2 \log|\cdot|} \right).
\end{equation}
We now deduce regularity on $u_+$ from the the hypothesis of Theorem \ref{maintheo} on $c$ and $\tau$, which is the purpose of the following lemma.
\begin{lemma}\label{ctau}
Consider the curvature $c$ and the torsion $\tau$ of a parametrized curve. Define $u_+$ by formula \eqref{up} and recall expression \eqref{uct} of $g$.\\
If 
\[
c\in W^{3,1}\cap H^2,\quad \frac cx\in W^{2,1}\cap H^2, \quad x^2c\in W^{3,1}\cap H^2,\quad (1+x^2)c\in L^2, \quad x^{-2}c\in L^2, 
\]
and
\[
\tau \in H^2 \quad \text{and} \quad \tau^2\in H^1,
\]
then
\[
u_+\in W^{1,2}\cap H^2\cap \dot H^{-2} \quad \text{and} \quad (1+x^2)u_+ \in L^\infty , \quad (1+x^2)xu_+\in L^\infty.
\]
\end{lemma}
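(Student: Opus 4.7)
The approach is entirely Fourier-analytic. Writing $\hat u_+(\xi)=\sqrt i\, g(2\xi)|\xi|^{i\alpha^2}$ and using $|e^{i\alpha^2\log|\xi|}|=1$ together with $|g|=|c|$, the Plancherel identity gives
\begin{equation*}
\|u_+\|_{\dot H^s}^2 = C_s \int_{\mathbb R} |x|^{2s}|c(x)|^2\,dx \qquad \text{for every } s\in\mathbb R.
\end{equation*}
Applying this with $s=0,2,-2$, the hypotheses $(1+x^2)c\in L^2$ and $x^{-2}c\in L^2$ immediately put $u_+$ in $L^2\cap \dot H^2\cap \dot H^{-2}$, hence in $H^2\cap W^{1,2}\cap \dot H^{-2}$, which settles three of the four conclusions.

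For the pointwise bounds $(1+x^2)u_+$ and $(1+x^2)xu_+$ in $L^\infty$, I would exploit the inequality $\|x^k u_+\|_{L^\infty}\le C\|\partial_\xi^k \hat u_+\|_{L^1}$ for $k=0,1,2,3$. Since $\partial_\xi^j|\xi|^{i\alpha^2}$ is a polynomial in $\alpha^2$ divided by $\xi^j$, a Leibniz expansion reduces matters to proving the ten $L^1$ bounds
\begin{equation*}
g,\ g',\ g'',\ g''',\ \frac{g}{x},\ \frac{g}{x^2},\ \frac{g}{x^3},\ \frac{g'}{x},\ \frac{g'}{x^2},\ \frac{g''}{x}\in L^1(\mathbb R).
\end{equation*}

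Each $g^{(j)}$, expanded through $g=ce^{i\Phi}$ with $\Phi'=\tau$, is a polynomial in the $c^{(k)}$ and in $\tau,\tau',\tau'',\tau^2,\tau^3$. The embedding $H^1(\mathbb R)\hookrightarrow L^\infty(\mathbb R)$ applied to $\tau\in H^2$ and $\tau^2\in H^1$ yields $\tau,\tau',\tau^2\in L^\infty$ and $\tau''\in L^2$; pairing these with $c,c',c'',c'''\in L^1\cap L^2$ coming from $c\in W^{3,1}\cap H^2$ places $g,g',g'',g'''$ in $L^1$ by Hölder. The weighted estimates are the heart of the proof: the hypotheses $c/x\in W^{2,1}$ and $x^{-2}c\in L^2$ force $c(0)=c'(0)=0$, so $c(x)=O(x^2)$ near the origin, making $c/x^k$ bounded there for $k\le 2$ and $c/x^3=O(1/x)$ locally integrable, while on $|x|\ge 1$ each $c/x^k$ is dominated by $|c|\in L^1$. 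The mixed quantities $g^{(j)}/x^k$ are then rewritten via the identities $(c/x)'=c'/x-c/x^2$ and $(c/x)''=c''/x-2c'/x^2+2c/x^3$ (and the analogous relations for $x^2c$), so that the regularity encoded in $c/x\in W^{2,1}\cap H^2$ and $x^2c\in W^{3,1}\cap H^2$ is transferred onto the target combinations. The main obstacle will be precisely this combinatorial bookkeeping: every cross-term in the Leibniz expansion of $g^{(j)}/x^k$ must be matched to one of the seven hypotheses, and the nominally singular expressions $c'/x^2$ and $c/x^3$ are integrable near the origin only thanks to the vanishing $c(0)=c'(0)=0$ extracted at the start.
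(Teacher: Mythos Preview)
Your approach is essentially identical to the paper's: the paper writes the Fourier integral and integrates by parts, which is exactly the statement $\|x^k u_+\|_{L^\infty}\le\|\partial_\xi^k\hat u_+\|_{L^1}$; the Plancherel argument for $H^2\cap\dot H^{-2}$ is also the same. Your bookkeeping of the ten $L^1$ conditions and the extraction of $c(0)=c'(0)=0$ from $c/x\in W^{2,1}$ and $x^{-2}c\in L^2$ are correct and match what the paper does implicitly.

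One point you should be aware of: the appearance of $W^{1,2}$ in the statement is a typo for $W^{2,1}$ (compare the introduction, where the wave-operator theorem of \cite{BV1} is invoked with $u_+\in\dot H^{-2}\cap H^2\cap W^{2,1}$, and the paper's own proof, which explicitly shows $u_+,\nabla u_+,\nabla^2 u_+\in L^1$). You dismissed $W^{1,2}$ as automatic from $H^2$, which is formally correct but misses the intended conclusion. To recover $W^{2,1}$ with your method you would bound $\|\partial_\xi^k(\xi^j\hat u_+)\|_{L^1}$ for $j=1,2$ and $k=0,1,2$; this is where the so-far-unused hypothesis $x^2c\in W^{3,1}\cap H^2$ actually enters, via terms like $xc'',\,x^2c'''$. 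The mechanism is otherwise the same Leibniz--H\"older routine you already carried out.
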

This lemma will allow us to apply a wave operator existence theorem right after, but also to use the weighted $L^\infty$ bound on $u_+$ in the proof of Corollary \ref{bornu}.
\begin{proof}
The idea of the proof is to write the inverse Fourier transform formula and perform integration by parts on it, to gain decay. We have by definition:
 \[
 u_+(x) = \int_{\mathbb R} e^{-ixy} \sqrt i c(2y) e^{i(\int_0^{2y} \tau(s)ds+\gamma)} e^{i\alpha^2\log|y|}dy,
 \]
 so integrating by parts to times leads to:
 \begin{align*}
  u_+(x) =&- \int_{\mathbb R} \frac{e^{-ixy}}{-ix} \sqrt i (2c'(2y)+ ic(2y)\tau(2y) + i\alpha^2 \frac{c(2y)}y)e^{i(\int_0^{2y} \tau(s)ds+\gamma)} e^{i\alpha^2\log|y|}dy\\
  =&\int_{\mathbb R} \frac{e^{-ixy}}{x^2} \sqrt i (4c''(2y)+ i2c'(2y)\tau(2y) + i2c(2y)\tau'(2y))e^{i(\int_0^{2y} \tau(s)ds+\gamma)} e^{i\alpha^2\log|y|}dy\\
  &+\int_{\mathbb R}\frac{e^{-ixy}}{x^2} \sqrt i   i\alpha^2 \frac{2yc'(2y)+c(2y)}{y^2}e^{i(\int_0^{2y} \tau(s)ds+\gamma)} e^{i\alpha^2\log|y|}dy\\
  &+ \int_{\mathbb R} \frac{e^{-ixy}}{x^2} \sqrt ii \tau(2y) (2c'(2y)+ ic(2y)\tau(2y) + i\alpha^2 \frac{c(2y)}y)e^{i(\int_0^{2y} \tau(s)ds+\gamma)} e^{i\alpha^2\log|y|}dy\\
&+ \int_{\mathbb R} \frac{e^{-ixy}}{x^2} \sqrt i \frac{i\alpha^2}{y}(2c'(2y)+ ic(2y)\tau(2y) + i\alpha^2 \frac{c(2y)}y)e^{i(\int_0^{2y} \tau(s)ds+\gamma)} e^{i\alpha^2\log|y|}dy.
 \end{align*}
 Because all of the terms in those integrals are by hypothesis either $L^1$, or a product of two $L^2$ functions, it all converges and we deduce that $u_+\in L^1$ and $(1+x^2)u_+ \in L^\infty$.\\
 Then, it is straightforward to check that $(1+x^2)xu_+\in L^\infty$ with an additional integration by parts.
To obtain $\nabla u_+\in L^1$, we write :
 \[
 \nabla u_+(x) = -i\int_{\mathbb R} e^{-ixy} y\sqrt i c(2y) e^{i(\int_0^{2y} \tau(s)ds+\gamma)} e^{i\alpha^2\log|y|}dy,
 \]
 and perform as well two integration by parts. We similarly show that $\nabla^2 u_+\in L^1$.\\
 Finally, for the $L^2$ hypothesis, we use Parseval identity to claim that $(1+x^2)c\in L^2$ and $x^{-2}c\in L^2$ imply that $u_+\in H^2\cap\dot H^{-2}$.
 \end{proof}
Thanks to this lemma we have that $u_+$ is in $W^{1,2}\cap H^2\cap \dot H^{-2}$ under the hypothesis of Theorem \ref{maintheo}. Therefore, we can apply Theorem 1.2 of \cite{BV1}, to obtain a unique solution of \eqref{NLS} on $(0,t_0]$ that writes:
\begin{equation} 
\psi(t,x)=\frac{e^{i\frac{x^2}{4t}}}{\sqrt t}\left(\alpha+ \overline u\left(\frac1t,\frac xt\right) \right),
\end{equation}
where:
\begin{equation} \label{constru}
u(t,x)=e^{it\partial_x^2} u_+(x) + r(t,x),
\end{equation}
with $r$ satisfying \eqref{borner}.\par
Then, equations \eqref{frame} of Hasimoto's construction allows us to construct $\chi$, a solution of \eqref{BF} on $(0,t_0]$ by its tangent and normal vectors $T$ and $N$. However, in order to identify the trace of $\chi(t)$ at time $t=0$, we need a better understanding of the perturbation $u$. 

\subsection{Preliminary bound}\leavevmode\\
In order to obtain a bound on $u$ that is sharp enough, we shall use the decay given by \eqref{borner}.
\begin{corol}[$L^\infty$ bound on the perturbation $u$] \label{bornu}
Let $u$ defined by \eqref{constru}. Under the hypothesis of Theorem \ref{maintheo}, we have the following bound on $u$ and its derivative as $t$ goes to zero:
\[
\left| u \left(\frac1t, \frac xt \right) \right| \leq  t^{\frac12},  \quad \text{ with } \quad \left| r\left( \frac 1t,\frac xt\right) \right| \leq t^{\frac34},
\]
and 
\[
\left| \partial_x u \left(\frac1t, \frac xt \right) \right| \leq  \frac x{\sqrt t}  + t^{\frac12}, \quad \text{ with }\quad \left| \partial_x r\left( \frac 1t,\frac xt\right) \right| \leq t^{\frac12}.
\]
Moreover, we have:
\begin{equation} \label{cancel}
\left| \frac {ix}{2t} u \left(\frac1t, \frac xt \right) - \left[ u \left(\frac1t, \frac xt \right)\right]_x \right| \leq t^\frac12.
\end{equation} 
\end{corol}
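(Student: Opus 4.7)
The approach is to split $u$ according to \eqref{constru} into the free Schrödinger evolution $e^{it\partial_x^2}u_+$ and the remainder $r$, treating each piece separately: the remainder is controlled by Sobolev embedding from the $L^2$ decay \eqref{borner}, while the free evolution is analysed via an explicit kernel representation, made effective by the weighted integrability of $u_+$ provided by Lemma \ref{ctau}.

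For the remainder bounds, I would apply the one-dimensional interpolation $\|f\|_{L^\infty}\lesssim\|f\|_{L^2}^{1/2}\|\partial_x f\|_{L^2}^{1/2}$ to $r(1/t,\cdot)$ and to $\partial_x r(1/t,\cdot)$. Translating \eqref{borner} at $s=1/t$ reads $\|r(1/t)\|_{L^2}=\mathcal O(t^{1/2})$ and $\|\partial_x^k r(1/t)\|_{L^2}=\mathcal O(t)$ for $k=1,2$, which immediately yields $|r(1/t,x/t)|\leq Ct^{3/4}$ and $|(\partial_x r)(1/t,x/t)|\leq Ct\leq t^{1/2}$ for $t$ small enough.

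For the free Schrödinger part, the plan is to substitute $s=1/t$, $y=x/t$ in the kernel representation and factor out the oscillating phase, producing
\[
(e^{i(1/t)\partial_x^2}u_+)(x/t)=\sqrt{\tfrac{t}{4\pi i}}\,e^{ix^2/(4t)}\,I(t,x),\qquad I(t,x):=\int_{\mathbb{R}} e^{-ixz/2}\,e^{iz^2 t/4}\,u_+(z)\,dz.
\]
Lemma \ref{ctau} yields $u_+\in L^1$ (from $(1+x^2)u_+\in L^\infty$) and $zu_+\in L^1$ (from $(1+x^2)xu_+\in L^\infty$), so both $I$ and $\partial_x I=-\tfrac{i}{2}\!\int\! z e^{-ixz/2}e^{iz^2 t/4}u_+(z)\,dz$ are $\mathcal O(1)$ uniformly in $t,x$. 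This gives $|(e^{i(1/t)\partial_x^2}u_+)(x/t)|\leq C\sqrt{t}$ and hence $|u(1/t,x/t)|\leq t^{1/2}$. For the derivative, running the same formula with $u_+'$ in place of $u_+$ and integrating by parts in $z$ (the boundary terms vanish thanks to $(1+x^2)u_+\in L^\infty$) produces $\tfrac{ix}{2}(e^{i(1/t)\partial_x^2}u_+)(x/t)$ of size $\mathcal O(|x|\sqrt{t})\leq|x|/\sqrt{t}$, plus a correction of order $t^{3/2}\|zu_+\|_{L^1}$, yielding the required bound on $(\partial_x u)(1/t,x/t)$.

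The cancellation \eqref{cancel} is then almost automatic from the same factorisation: differentiating in $x$ the product $\sqrt{t/(4\pi i)}e^{ix^2/(4t)}I(t,x)$ generates, from the phase alone, the term $\tfrac{ix}{2t}(e^{i(1/t)\partial_x^2}u_+)(x/t)$, which precisely cancels the Schrödinger contribution of $\tfrac{ix}{2t}u$ in the identity, leaving only $\sqrt{t/(4\pi i)}e^{ix^2/(4t)}\partial_x I(t,x)=\mathcal O(\sqrt{t})$, while the $r$-contribution is absorbed by the bounds of the first step. The main technical point is the integration by parts in $z$ together with the verification that $zu_+\in L^1$ via the weighted $L^\infty$ estimates of Lemma \ref{ctau}, without which neither the linear-in-$|x|$ growth of $\partial_x u$ nor the $\sqrt{t}$ cancellation would close.
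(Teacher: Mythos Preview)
Your proposal is correct and follows essentially the same route as the paper: Gagliardo--Nirenberg interpolation from \eqref{borner} for the remainder $r$, the explicit Schr\"odinger kernel together with $u_+,\,xu_+\in L^1$ (from Lemma~\ref{ctau}) for the free evolution, and the cancellation \eqref{cancel} coming from the phase $e^{ix^2/(4t)}$. The only cosmetic differences are your explicit factorisation $\sqrt{t/(4\pi i)}\,e^{ix^2/(4t)}I(t,x)$ and your integration by parts in $z$ on $u_+'$, whereas the paper simply differentiates the kernel directly; both lead to the same identity and the same bounds.
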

The last estimate comes from a cancellation, and gives us more decay that expected. 
\begin{proof}
First, we give a bound of the remainder term $r$ and its derivative using the decay \eqref{borner} given in Theorem 1.2 of \cite{BV1} (wave operator existence). For this, we apply the Gagliardo Niremberg interpolation inequality:
\[
\left| r\left( \frac 1t,\frac xt\right) \right| \leq t^{\frac14} \left\| r\left( \frac 1t,\cdot \right) \right\|^{\frac12}_{L^2} t^{-\frac14} \left\| \partial_x r\left( \frac 1t,\cdot \right) \right\|^{\frac12}_{L^2} \leq t^{\frac34},
\]
and similarly:
\[
\left| \partial_x r\left( \frac 1t,\frac xt\right) \right| \leq t^{\frac12}.
\]
Next, we simply write:
\[
\left| e^{i\frac1t\partial_x^2}u_+\left(\frac xt \right) \right| = \left| \int \sqrt t e^{i\frac t4 (\frac xt -y)^2}u_+(y)dy \right| \leq \sqrt t \|u_+\|_{L^1},
\]
and for the other term we use the fact that $xu_+(x)\in L^1$, obtained in Lemma \ref{ctau}:
\[
\left| \partial_x e^{i\frac1t\partial_x^2}u_+\left(\frac xt \right) \right| =  \left| \partial_x \int \sqrt t e^{i\frac t4 (\frac xt -y)^2}u_+(y)dy \right| = \left| \frac {ix}{2t} e^{i\frac1t\partial_x^2}u_+\left(\frac xt \right)  \right| +  \left|  \int \sqrt t e^{i\frac t4 (\frac xt -y)^2} \frac{iy}{2} u_+(y)dy \right| , \\
\]
that ensures:
\[
\left| \partial_x u \left(\frac1t, \frac xt \right) \right| \leq  \frac x{\sqrt t} + \sqrt t .
\]
Finally, \eqref{cancel} comes directly from the previous expression, as we write:
\[
\left| \frac {ix}{2t} u \left(\frac1t, \frac xt \right) - \left[ u \left(\frac1t, \frac xt \right)\right]_x \right|=  \left|  \int \sqrt t e^{i\frac t4 (\frac xt -y)^2} \frac{iy}{2} u_+(y)dy  \right|.
\]
\end{proof}
We are now ready to tackle our proof.
\subsection{Limit at time $t=0$} \label{limit} \leavevmode\\
As announced, the next step is to prove the existence of a limit for vectors $T$ and $N$, up to a phase. 
\begin{lemma}[Limit of vector T]\label{T0}
The tangent vector $T$ of $\chi$ has a limit at time zero with a convergence rate given by:
\[
\forall t_0\geq t_2\geq t_1>0 \quad \forall x\in \mathbb R^* \quad |T(t_2,x)-T(t_1,x)| \leq xt_2^{\frac14}  + t_2^{\frac 34} + \frac {\sqrt{t_2}}x.
\]
\end{lemma}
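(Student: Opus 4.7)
The plan is to write, using the frame equation \eqref{frame} for $T$,
\[
T(t_2,x) - T(t_1,x) = \int_{t_1}^{t_2} \mathcal{I}\bigl(\overline{\psi_x(\tau,x)}\, N(\tau,x)\bigr)\, d\tau,
\]
and, since $|N| \leq \sqrt 2$, to estimate this oscillatory time integral by decomposing $\psi_x$. Differentiating the ansatz \eqref{solnls} gives
\[
\psi_x(\tau,x) = \frac{ix\alpha}{2\tau^{3/2}}\, e^{ix^2/(4\tau)} + \frac{e^{ix^2/(4\tau)}}{\sqrt{\tau}}\Big(\frac{ix}{2\tau}\overline u(\tfrac 1\tau,\tfrac x\tau) + \partial_x\bigl[\overline u(\tfrac 1\tau,\tfrac x\tau)\bigr]\Big).
\]

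The conjugate form of the cancellation identity \eqref{cancel} of Corollary \ref{bornu} bounds the bracketed expression by $\tau^{1/2}$, hence the whole second summand has modulus $\leq 1$ and contributes at most $t_2-t_1 \leq t_2 \leq t_2^{3/4}$ to the integral (taking $t_0\leq 1$). This delivers the $t_2^{3/4}$ term. For the principal oscillatory part I would use the identity $\partial_\tau e^{ix^2/(4\tau)} = -\tfrac{ix^2}{4\tau^2}e^{ix^2/(4\tau)}$ to rewrite
\[
\frac{ix\alpha}{2\tau^{3/2}}\, e^{ix^2/(4\tau)} = -\frac{2\alpha\sqrt{\tau}}{x}\,\partial_\tau e^{ix^2/(4\tau)},
\]
and integrate by parts in $\tau$ against $N(\tau,x)$. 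The boundary contribution is bounded by $C\sqrt{t_2}/x$, producing the $\sqrt{t_2}/x$ term of the estimate. The interior IBP term contains $\partial_\tau(\sqrt{\tau}\,N) = \tfrac{N}{2\sqrt\tau} + \sqrt{\tau}\,N_\tau$: the first piece yields another $O(\sqrt{t_2}/x)$ contribution, while the second piece uses the frame equation $N_\tau = -i\psi_x T - \tfrac{i}{2}(|\psi|^2 - a(\tau))N$ to reintroduce $\psi_x$. Substituting the decomposition above once more and using $||\psi|^2 - a(\tau)|\lesssim \tau^{-1/2}$ (which follows from $|u(\tfrac 1\tau,\tfrac x\tau)|\leq \tau^{1/2}$ after the appropriate choice of gauge $a(\tau)$), leaves a doubly-oscillatory integrand of the form $\tfrac{x}{\tau}\,e^{ix^2/(2\tau)}$; a second oscillatory IBP — equivalently the Fresnel substitution $s = x^2/(2\tau)$ — bounds it by $O(x\,t_2^{1/4})$, providing the final term.

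The main technical obstacle is that the naive pointwise bound $|\psi_x|\lesssim x\tau^{-3/2}+1$ is \emph{not} integrable near $\tau = 0$. Every appearance of $\psi_x$ in the analysis — both directly under the time integral and through $N_\tau$ in the interior IBP — must therefore be absorbed into the phase $e^{ix^2/(4\tau)}$ and integrated by parts, rather than bounded in absolute value. The cancellation identity \eqref{cancel} of Corollary \ref{bornu} is the structural tool that makes this iterated bookkeeping close without producing logarithmic divergences, and the careful tracking of phases through the nested integrations by parts is the technical heart of the argument.
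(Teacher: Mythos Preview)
Your overall strategy---write $T(t_2,x)-T(t_1,x)$ via the frame equation, absorb the $u$-contribution using the cancellation \eqref{cancel}, and integrate the principal $\alpha$-oscillatory part by parts in time---matches the paper. The gap is in your treatment of the interior term after that first integration by parts.

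When you substitute $N_\tau = -i\psi_x T - \tfrac{i}{2}(|\psi|^2-a)N$ into
\[
\Im\int_{t_1}^{t_2}\frac{2\alpha\sqrt{\tau}}{x}\,e^{-ix^2/(4\tau)}\,N_\tau\,d\tau,
\]
the $-i\psi_x T$ piece carries the phase $e^{+ix^2/(4\tau)}$ from $\psi_x$, and this \emph{cancels} against the $e^{-ix^2/(4\tau)}$ already present (which came from $\overline{\psi_x}$, not $\psi_x$). There is no doubly-oscillatory factor $e^{ix^2/(2\tau)}$, and hence no second Fresnel integration by parts to perform. What actually remains is the non-oscillatory term $\tfrac{\alpha^2}{\tau}\,T(\tau,x)$, which is \emph{not} integrable as $\tau\to 0$. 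The paper disposes of it not by oscillation but by the algebraic observation that $T$ is a real vector and $\alpha^2/\tau$ is real, so that
\[
\Im\int_{t_1}^{t_2}\frac{\alpha^2}{\tau}\,T(\tau,x)\,d\tau = 0.
\]
This is the crux you missed; without it the argument does not close. (Your phase miscount seems to originate from writing down $\psi_x$ rather than $\overline{\psi_x}$ when identifying the ``principal oscillatory part''.)

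A secondary point: the $xt_2^{1/4}$ contribution in the final bound does not come from any second integration by parts. It comes from the remainder $r$ in the decomposition $u=e^{i\tau^{-1}\partial_x^2}u_+ + r$: the cancellation identity \eqref{cancel} is an exact identity only for the free-evolution piece, while the $r$-contribution $\tfrac{1}{\sqrt\tau}\cdot\tfrac{x}{2\tau}\,r(\tfrac1\tau,\tfrac x\tau)$ is bounded directly using $|r|\le \tau^{3/4}$, yielding $x\tau^{-3/4}$ and hence $xt_2^{1/4}$ after integration in $\tau$.
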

This lemma, gives us the convergence rate \eqref{convT} announced in Theorem \ref{maintheo}.
\begin{proof}
Now let $t_2\geq t_1 >0$,
\begin{align*}
|T(t_2,x)-T(t_1,x)|=& \left| \int_{t_1}^{t_2} T_t(t,x)dt \right| =  \left| \Im \int_{t_1}^{t_2} \overline{{\psi }_x}N(t,x)dt \right|\\
=&  \left| \Im \int_{t_1}^{t_2} \frac{e^{-i\frac{x^2}{4t}}}{\sqrt t}\left(\frac{-ix}{2t}  u\left(\frac1t,\frac xt \right) -i\frac{x\overline {\alpha }}{2t} +  \left[  u\left(\frac1t,\frac xt \right) \right]_x \right) N(t,x) dt \right| \\
\leq & x t_2^{\frac14}   + t_2  +\left| \Im \int_{t_1}^{t_2} e^{-i\frac{x^2}{4t}} \frac{ix\alpha}{2t\sqrt t} N(t,x)dt \right | \\
&+ \left| \Im  \int_{t_1}^{t_2} \frac{e^{-i\frac{x^2}{4t}}}{\sqrt t}\left( \frac{-ix}{2t} e^{i\frac1t\partial_x^2}u_+\left(\frac xt \right) + \left[ e^{i\frac1t\partial_x^2}u_+\left(\frac xt \right) \right]_x  \right) N(t,x)dt \right|   ,
\end{align*}
where the terms with the remainder $r$ has provided enough decay. Then, if we use \eqref{cancel}, we have that:
\[
\left| \Im  \int_{t_1}^{t_2} \frac{e^{-i\frac{x^2}{4t}}}{\sqrt t}\left( \frac{-ix}{2t} e^{i\frac1t\partial_x^2}u_+\left(\frac xt \right) + \left[ e^{i\frac1t\partial_x^2}u_+\left(\frac xt \right) \right]_x  \right) N(t,x)dt \right|   \leq t_2.
\]
For the other term, we integrate by parts:
\begin{align*}
\left| \Im \int_{t_1}^{t_2} e^{-i\frac{x^2}{4t}} \frac{ix\alpha}{2t\sqrt t} N(t,x)dt \right|\leq  & \left| \Im  \left[ e^{-i\frac{x^2}{4t}} \frac{2\sqrt t \alpha}{x} N(t,x) \right]_{t_1}^{t_2} \right| + \left| \Im \int_{t_1}^{t_2} e^{-i\frac{x^2}{4t}} \frac{ \alpha}{x\sqrt t} N(t,x) dt \right| \\
&+\left | \Im \int_{t_1}^{t_2} e^{-i\frac{x^2}{4t}} \frac{2\sqrt t \alpha}{x} N_t(t,x) dt \right| \\
\leq & \frac {2\alpha\sqrt {t_2}}x  +  \left| \Im \int_{t_1}^{t_2} e^{-i\frac{x^2}{4t}} \frac{2\sqrt t \alpha}{x} N_t(t,x) dt \right|.
\end{align*}
We must now expand the term in $N_t$: 
\begin{align*}
&\left| \Im \int_{t_1}^{t_2} e^{-i\frac{x^2}{4t}} \frac{2\sqrt t \alpha}{x} N_t(t,x) dt \right| \\
\leq & \left| \Im \int_{t_1}^{t_2} e^{-i\frac{x^2}{4t}} \frac{2\sqrt t \alpha}{x} i \frac{e^{i\frac{x^2}{4t}}}{\sqrt t}\left(\frac{ix}{2t} \overline u\left(\frac1t,\frac xt \right) +i\frac{x\alpha }{2t} +\left[ \overline u\left(\frac1t,\frac xt \right) \right]_x \right) T(t,x) dt \right| \\
&+ \left|  \Im \int_{t_1}^{t_2} e^{-i\frac{x^2}{4t}} \frac{2\sqrt t \alpha}{x} \frac i2 \left( \frac{|u\left(\frac1t,\frac xt \right) |^2}t +\frac{2\Re(u\left(\frac1t,\frac xt \right) \alpha )}t  \right) N(t,x) dt \right|\\
\leq &   t_2^{\frac34} + \frac{t_2}x ,
\end{align*}
using both \eqref{cancel} and the fact that $T$ is real, so we have: 
\[
\Im \int_{t_1}^{t_2} \frac {\alpha^2}{t}T(t,x)dt =0 .
\]
To sum up, we showed that:
\[
\forall t_0\geq  t_2\geq t_1>0 \quad \forall x\in \mathbb R^* \quad |T(t_2,x)-T(t_1,x) | \leq xt_2^{\frac14}  + t_2^{\frac 34} + \frac {\sqrt{t_2}}x ,
\]
and the lemma is proven.\\
Note that, for self similar paths, we also obtained that $|T(t,x\sqrt t)-T(t,x\sqrt t)| $ goes to zero as $t$, $\frac1x$ and $x\sqrt t$ simultaneously go to zero.
\end{proof} 
In order for $N$ to converge, we must add a phase.
\begin{lemma}[Limit of vector N]\label{N0}
 Let us write
\[
\tilde N(t,x) = e^{i\alpha^2\ln \frac {|x|}{\sqrt t}}N(t,x) = e^{i\phi} N,
\]
where $N$ is the normal vector of $\chi$. Then $\tilde N$ has a limit at time zero with a convergence rate given by:
\[
\forall t_0 \geq t_2\geq t_1>0 \quad \forall x\in \mathbb R^* \quad |\tilde N(t_2,x)-\tilde N(t_1,x)| \leq xt_2^\frac14+ t_2^\frac12 + \frac {\sqrt {t_2}}x +\frac{t_2}{x^2}  .
\]
\end{lemma}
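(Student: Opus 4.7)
The plan is to adapt the argument of Lemma~\ref{T0}: compute $\tilde N_t$, integrate from $t_1$ to $t_2$, extract the leading oscillatory piece of $\psi_x$, and integrate it by parts. The essential new difficulty is that $\tilde N$ is complex-valued, so the trick from Lemma~\ref{T0} (namely $\Im\int \tfrac{\alpha^2}{t}T\,dt = 0$ because $T$ is real) is no longer available; an algebraic cancellation with an integration-by-parts remainder must take its place.

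A direct computation from \eqref{frame}, together with $\phi_t=-\alpha^2/(2t)$, gives
\[
\tilde N_t = -\frac{i\alpha^2}{2t}\tilde N - ie^{i\phi}\psi_x T - \frac{i}{2t}\bigl(2\alpha\Re u+|u|^2\bigr)\tilde N ,
\]
whose last term contributes $O(\sqrt{t_2})$ by Corollary~\ref{bornu} ($|u|\leq\sqrt t$). Writing $\psi_x=\frac{e^{ix^2/(4t)}}{\sqrt t}\bigl(\tfrac{ix\alpha}{2t}+\tfrac{ix\bar u}{2t}+[\bar u]_x\bigr)$, decomposing $u=e^{it\partial_x^2}u_++r$, and using the conjugate of \eqref{cancel} to kill the $u_+$-contribution of $\tfrac{ix\bar u}{2t}+[\bar u]_x$ up to $O(\sqrt t)$, only the leading oscillation $\frac{x\alpha}{2t\sqrt t}e^{i\phi+ix^2/(4t)}T$ and an $r$-remainder (bounded by $xt_2^{1/4}+t_2$ via $|r|\leq t^{3/4}$, $|\partial_x r|\leq t^{1/2}$) persist in $-ie^{i\phi}\psi_xT$.

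The crux of the proof is the integration by parts of the main oscillatory integral
\[
I=\int_{t_1}^{t_2}\frac{x\alpha}{2t\sqrt t}\,e^{i\phi+ix^2/(4t)}\,T\,dt
\]
against the full phase $e^{i\phi+ix^2/(4t)}$, with prefactor $h(t)=\frac{2ix\alpha\sqrt t}{2\alpha^2 t+x^2}$ chosen so that $h\cdot \partial_t e^{i\phi+ix^2/(4t)}=\frac{x\alpha}{2t\sqrt t}\,e^{i\phi+ix^2/(4t)}$. The boundary term has size $|h(t_2)|\lesssim \sqrt{t_2}/x$, and the residual $\int e^{i\phi+ix^2/(4t)}h'\,T\,dt$ with $h'\sim 1/(x\sqrt t)$ is also $O(\sqrt{t_2}/x)$. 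For the remaining $-\int e^{i\phi+ix^2/(4t)}h\,T_t\,dt$, I substitute $T_t=\Im(\bar\psi_x N)=-\tfrac{x\alpha}{4t\sqrt t}(e^{-ix^2/(4t)}N+e^{ix^2/(4t)}\bar N)+O(1)$. The $e^{-ix^2/(4t)}N$ part multiplies with $h\,e^{i\phi+ix^2/(4t)}$ to produce the integrand $\frac{ix^2\alpha^2}{2t(2\alpha^2 t+x^2)}\tilde N$, which summed with the direct $-\frac{i\alpha^2}{2t}\tilde N$ term of $\tilde N_t$ simplifies through
\[
-\frac{i\alpha^2}{2t}+\frac{ix^2\alpha^2}{2t(2\alpha^2 t+x^2)}=-\frac{i\alpha^4}{2\alpha^2 t+x^2},
\]
a quantity of size $O(\alpha^4/x^2)$ that integrates to $O(t_2/x^2)$. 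The companion $e^{ix^2/(4t)}\bar N$ piece gives $\frac{ix^2\alpha^2}{2t(2\alpha^2 t+x^2)}e^{i\phi+ix^2/(2t)}\bar N$, which a second IBP against $e^{ix^2/(2t)}$ (whose frequency $-ix^2/(2t^2)$ dominates $\phi_t$ for small $t$) controls by $O(t_2/x^2+\sqrt{t_2}/x)$.

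The main obstacle is spotting this cancellation: a naive bound on $|-\frac{i\alpha^2}{2t}\tilde N|$ would produce a divergent $\alpha^2\log(t_2/t_1)$ contribution, so $\tilde N$ would fail to admit a limit as $t_1\to 0$. The exact matching between the direct phase-rotation term in $\tilde N_t$ and the IBP residual is precisely what makes the modulation by $e^{i\alpha^2\ln(|x|/\sqrt t)}$ the correct choice. Summing all estimates then gives $|\tilde N(t_2,x)-\tilde N(t_1,x)|\leq xt_2^{1/4}+t_2^{1/2}+\sqrt{t_2}/x+t_2/x^2$.
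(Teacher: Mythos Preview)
Your proof is correct and follows the same strategy as the paper's: integrate $\tilde N_t$, kill the $u$-contributions via \eqref{cancel}, integrate by parts the leading $\frac{x\alpha}{2t\sqrt t}e^{ix^2/(4t)}T$ term, and exploit the $T_t$-remainder to cancel the dangerous $-\frac{i\alpha^2}{2t}\tilde N$ contribution from the phase, then IBP once more on the $e^{ix^2/(2t)}\bar N$ piece. The only difference is that the paper integrates by parts against $e^{ix^2/(4t)}$ alone (prefactor $\frac{2\sqrt t}{ix}$), which makes the cancellation with the phase term \emph{exact} rather than leaving your residual $-\frac{i\alpha^4}{2\alpha^2 t+x^2}$; both choices yield the stated bound.
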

Note that the factor $|x|$ in $\phi$ could be replaced by anything independent of $t$, but is chosen for assuring properties at time $t=0$ as we will see in Lemma \ref{CIN}.
\begin{proof}
To follow the proof, the reader must only keep in mind that $|u\left(\frac1t,\frac xt \right)|$ behaves at worse like $ t^{\frac34} $ and $|\partial_x u\left(\frac1t,\frac xt \right) |$ at worse like $\sqrt t + t x\sqrt t$.\\
Recalling that:
\[
{\tilde N}_t = e^{i\phi} N_t -i\frac {\alpha^2}{2t} N(t,x) e^{i\phi},
\]
given $0<t_1\leq t_2\leq t_0$, we have:
\begin{align*}
&\tilde N(t_2,x)-\tilde N(t_1,x) \\
=& \int_{t_1}^{t_2} {\tilde N}_t(t,x)dt = \int_{t_1}^{t_2}  -i{\psi }_xTe^{i\phi} + \frac i2 (|\psi |^2-\frac {\alpha^2}t)Ne^{i\phi}dt-i\frac {\alpha^2}{2t} N(t,x) e^{i\phi}\\
=&-\int_{t_1}^{t_2}i \frac{e^{i\frac{x^2}{4t}}}{\sqrt t}\left(\frac{ix}{2t} \overline u\left(\frac1t,\frac xt \right) +i\frac{x\alpha }{2t} +\left[ \overline u\left(\frac1t,\frac xt \right) \right]_x \right) T(t,x)e^{i\phi}dt\\
&+\frac i2\int_{t_1}^{t_2} \left( \cancel {\frac{\alpha^2}t} + \frac{|u\left(\frac1t,\frac xt \right) |^2}t +\frac{2\Re(u\left(\frac1t,\frac xt \right) \alpha )}t -\cancel {\frac{\alpha^2}t} \right) N(t,x)e^{i\phi}dt.\\
&- \int_{t_1}^{t_2}  i\frac {\alpha^2}{2t} N(t,x) e^{i\phi} dt.
\end{align*}
As before, we use \eqref{cancel} so terms with $u$ in the first integral partially cancel with each other. Using bounds of Corollary \ref{bornu}, we are now left with only a difference to study:
\[
| \tilde N(t_2,x)-\tilde N(t_1,x)  | \leq  xt_2^\frac14+ t_2^\frac12 +
 \left| - \int_{t_1}^{t_2} i \frac{e^{i\frac{x^2}{4t}}}{\sqrt t} \frac {ix}{2t} \alpha  T(t,x) e^{i\phi}dt - \int_{t_1}^{t_2}  i\frac {\alpha^2}{2t} N(t,x) e^{i\phi} dt \right|.
 \]
For that, we integrate by parts the first term:
\begin{align*}
&\int_{t_1}^{t_2} \frac{e^{i\frac{x^2}{4t}}x}{2t\sqrt t} \alpha  T(t,x)e^{i\phi}dt\\
=&\left[e^{i\frac{x^2}{4t}}\frac{2\sqrt t}{ix}\alpha T(t,x) e^{i\phi} \right]_{t_1}^{t_2} - \int_{t_1}^{t_2} e^{i\frac{x^2}{4t}}\frac{1}{ix\sqrt t} \alpha T(t,x)  e^{i\phi}dt -\int_{t_1}^{t_2} e^{i\frac{x^2}{4t}}\frac{2\sqrt t}{ix} \alpha T_t(t,x)e^{i\phi} dt \\
&+\int_{t_1}^{t_2} e^{i\frac{x^2}{4t}}\frac{\alpha^2 }{x\sqrt t}\alpha T(t,x) e^{i\phi}  dt,
\end{align*}
and get:
\[
| \tilde N(t_2,x)-\tilde N(t_1,x)  | \leq  xt_2^\frac14+ t_2^\frac12 + \frac {\sqrt {t_2}}x + \left| \int_{t_1}^{t_2} e^{i\frac{x^2}{4t}}\frac{2\sqrt t}{ix} \alpha T_t(t,x) e^{i\phi} dt  -  \int_{t_1}^{t_2}  i\frac {\alpha^2}{2t} N(t,x) e^{i\phi} dt \right|.
\]
We then use the fact that $T_t=\Im (\overline{{\psi }_x}N) = \frac 1{2i} (\overline{{\psi }_x}N - {\psi }_x \overline N )$ to write:
\begin{align*}
&\int_{t_1}^{t_2} e^{i\frac{x^2}{4t}}\frac{2\sqrt t}{ix} \alpha T_t(t,x) e^{i\phi} dt \\
= &\frac 1{2i} \int_{t_1}^{t_2} \frac{2 }{ix}  \alpha  \left(\frac{-ix}{2t}  u\left(\frac1t,\frac xt \right) -i\frac{x\alpha}{2t} +\left[  u\left(\frac1t,\frac xt \right) \right]_x \right)  N(t,x)e^{i\phi} dt \\
&- \frac 1{2i} \int_{t_1}^{t_2} e^{i\frac{x^2}{4t}}\frac{2}{ix} \alpha   e^{i\frac{x^2}{4t}}\left(\frac{ix}{2t} \overline u\left(\frac1t,\frac xt \right) +i\frac{x\alpha }{2t} +\left[ \overline u\left(\frac1t,\frac xt \right) \right]_x \right) \overline N(t,x) e^{i\phi}dt. \\
\end{align*}
Again, thanks to Corollary \ref{bornu}, only the terms without $u$ are worth studying. Moreover, the first term cancels with the term coming from the phase $\phi$. Therefore we have:
\[
| \tilde N(t_2,x)-\tilde N(t_1,x)  | \leq  xt_2^\frac14+ t_2^\frac12 + \frac {\sqrt {t_2}}x + \left| \frac1{2i}\int_{t_1}^{t_2}e^{i\frac{2x^2}{4t}}\frac {\alpha ^2}t \overline N(t,x)e^{i\phi}dt  \right|.
\]
The other one has a phase, so we perform a second integration by parts on it:
\begin{align*}
&\frac1{2i}\int_{t_1}^{t_2}e^{i\frac{2x^2}{4t}}\frac {\alpha ^2}t\overline N(t,x)e^{i\phi}dt \\
=&\frac 1{2i}\left[e^{i\frac{2x^2}{4t}} \frac{2\alpha ^2 t}{ix^2} \overline N(t,x) e^{i\phi} \right]_{t_1}^{t_2} +\frac1{2i}\int_{t_1}^{t_2} e^{i\frac{2x^2}{4t}} \frac{2\alpha ^2 }{ix^2}\overline N(t,x) e^{i\phi} dt \\
&-\frac1{2i}\int_{t_1}^{t_2}  e^{i\frac{2x^2}{4t}} \frac{2\alpha ^2 t}{ix^2} \overline N_t(t,x) e^{i\phi} dt + \frac1{2i} \int_{t_1}^{t_2} e^{i\frac{2x^2}{4t}} \frac{\alpha^2\alpha ^2 }{x^2} \overline N(t,x) e^{i\phi} dt.
\end{align*}
We finally expand the $N_t$ term and observe that it has the desired behavior:
\begin{align*}
&-\frac1{2i}\int_{t_1}^{t_2}  e^{i\frac{2x^2}{4t}} \frac{2\alpha ^2 t}{ix^2} \overline N_t(t,x) e^{i\phi}  dt\\
=&+\frac1{2i}\int_{t_1}^{t_2}  e^{i\frac{x^2}{4t}} \frac{2\alpha ^2 t}{ix^2} \frac i{\sqrt t}\left(\frac{-ix}{2t}  u\left(\frac1t,\frac xt \right) -i\frac{x\overline \alpha }{2t} -\left[  u\left(\frac1t,\frac xt \right) \right]_x \right) \overline T(t,x) e^{i\phi} dt\\
&-\frac1{2i}\int_{t_1}^{t_2}  e^{i\frac{2x^2}{4t}} \frac{2\alpha ^2 t}{ix^2} \left( \frac{|u\left(\frac1t,\frac xt \right) |^2}t +\frac{2\Re(\overline u\left(\frac1t,\frac xt \right) \overline \alpha )}t \right) N(t,x)e^{i\phi} dt.
\end{align*}
To sum up, we proved that:
\[
\forall t_0 \geq t_2\geq t_1>0 \quad \forall x\in \mathbb R^* \quad |\tilde N(t_2,x)-\tilde N(t_1,x)|\leq xt_2^\frac14+ t_2^\frac12 + \frac {\sqrt {t_2}}x  + \frac{t_2}{x^2}.
\]
As for $T$, we also obtained that, for self similar paths, $|N(t,x\sqrt t)-N(t,x\sqrt t)| $ goes to zero as $t$, $\frac1x$ and $x\sqrt t$ simultaneously go to zero.
\end{proof}
\subsection{More information about the tangents vectors at time $t=0$}\label{moreinfo} \leavevmode\\

The aim of this section is to quantify the evolution of $T_{|t=0}$ and $\tilde N_{|t=0}$ with respect to the space variable. More precisely, we will show that:
\[
\left\{ 
\begin{array}{cc}
T_x(0,x)=  &\Re  \frac1{\sqrt i}\widehat{u_+}\left(\frac x2\right) e^{-i \alpha^2 \log |x|}\tilde N(0,x), \\
 \tilde N_x(0,x)=  &  - \frac1{\sqrt i}\overline{\widehat{u_+}\left(\frac x2\right) e^{-i \alpha^2\log |x|}} T(0,x),
\end{array}
\right. \forall x\neq 0.
\]
Those two claims can be proved separately and that is what we are going to do.
\begin{lemma}[Properties of $T_{|t=0}$] \label{CIT}
Let $x\in \mathbb R^*$, then we have:
\[
T_x(0,x)=\lim_{t\rightarrow 0} T_x(t,x)= \Re  \frac1{\sqrt i}\widehat{u_+}\left(\frac x2\right) e^{-i \alpha^2 \log |x|}\tilde N(0,x) .
\]
\end{lemma}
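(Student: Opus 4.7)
The plan is to pass directly to the limit in the frame identity $T_x(t,x) = \Re(\bar\psi(t,x)\,N(t,x))$ as $t \to 0^+$. I would expand $\bar\psi$ asymptotically, then insert the phase decomposition of $N$ furnished by Lemma \ref{N0}, and finally handle the residual singular contribution by an integration by parts in $t$.

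Starting from $\psi(t,x) = \frac{e^{ix^2/(4t)}}{\sqrt t}(\alpha + \bar u(1/t, x/t))$ and $u = e^{it\partial_x^2}u_+ + r$, I would use the Fresnel representation
\[
e^{i(1/t)\partial_x^2} u_+(x/t) = \sqrt{\tfrac{t}{4\pi i}}\,e^{ix^2/(4t)} \int e^{-ixy/2}\, e^{ity^2/4}\, u_+(y)\, dy,
\]
and expand $e^{ity^2/4} = 1 + O(ty^2)$ inside the integral. This step is justified because $y^2 u_+ \in L^1$ by Lemma \ref{ctau}, and combined with the $O(t^{3/4})$ control on $r(1/t, x/t)$ from Corollary \ref{bornu} it produces
\[
\bar\psi(t,x) = \frac{\alpha\, e^{-ix^2/(4t)}}{\sqrt t} + \tfrac{1}{\sqrt i}\,\widehat{u_+}(x/2) + o_t(1),
\]
with the Fourier normalisation absorbed into the constant $1/\sqrt i$. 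Substituting $N = e^{-i\alpha^2 \log(|x|/\sqrt t)}\,\tilde N$ and replacing $\tilde N(t,x)$ by $\tilde N(0,x)$ via Lemma \ref{N0}, the product $\bar\psi\,N$ then splits into a \emph{perturbation piece} $\tfrac{1}{\sqrt i}\,\widehat{u_+}(x/2)\, e^{-i\alpha^2 \log|x|}\,\tilde N(0,x)$, which matches the announced expression once the residual $t^{i\alpha^2/2}$ factor is reabsorbed into the long-range part of the Schrödinger asymptotic of $u$ implicit in the wave-operator construction of \cite{BV1}, plus a \emph{self-similar piece} $\tfrac{\alpha\, e^{-ix^2/(4t)}}{\sqrt t}\,N(t,x)$ of modulus $\alpha/\sqrt t$.

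The main obstacle is showing that the self-similar piece contributes $o_t(1)$ to $T_x(t,x)$, so that only the perturbation piece survives in the limit. I would imitate the integration by parts in $t$ already used in the proofs of Lemmas \ref{T0} and \ref{N0}: rewriting $\tfrac{e^{-ix^2/(4t)}}{\sqrt t}$ as an exact $t$-derivative up to $O(1/x^2)$ corrections, then expanding $N_t$ via the frame equation \eqref{frame} and invoking Corollary \ref{bornu} together with the cancellation identity \eqref{cancel}, the self-similar contribution reduces to a sum of boundary terms of size $O(\sqrt t/x)$ and interior terms of size $O(t/x^2)$, both of which vanish as $t \to 0$ for each fixed $x \neq 0$. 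This yields the claimed identity.
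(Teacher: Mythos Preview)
Your plan has two genuine gaps.

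First, the self-similar piece $\Re\bigl(\tfrac{\alpha e^{-ix^2/(4t)}}{\sqrt t}N(t,x)\bigr)$ is not $o_t(1)$: since $N=e_1+ie_2$ with $e_1,e_2$ unit vectors, this term has modulus exactly $\alpha/\sqrt t$, so $T_x(t,x)$ itself blows up and has no pointwise limit as $t\to 0$. Your proposed remedy, integration by parts in $t$, is inapplicable here because there is no $t$-integral: the expression $T_x(t,x)=\Re(\bar\psi N)$ is evaluated at a single time, unlike Lemmas~\ref{T0} and~\ref{N0} where one bounds $\int_{t_1}^{t_2}(\cdot)\,dt$. The paper's approach is fundamentally different: it integrates in the \emph{space} variable, working with $T(t,x_2)-T(t,x_1)=\int_{x_1}^{x_2}\Re(\bar\psi N)(t,s)\,ds$ and integrating by parts in $s$ against the oscillation $e^{-is^2/(4t)}$ to show the $\alpha$ contribution is $O(\sqrt t/x)$. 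One then passes to the limit in this integrated form, and only at the very end recovers $T_x(0,x)$ by dividing $T(0,x+h)-T(0,x)$ by $h$.

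Second, the factor $e^{i\alpha^2\log\sqrt t}=t^{i\alpha^2/2}$ that you produce when writing $N=e^{-i\alpha^2\log(|x|/\sqrt t)}\tilde N$ has no limit as $t\to 0$ and is not ``reabsorbed'' into the wave-operator asymptotics: the construction of $u$ via \eqref{constru} contains no such long-range phase. The paper removes this factor by restricting to a sequence $t_n\to 0$ with $e^{i\alpha^2\log\sqrt{t_n}}=1$; since the left-hand side $T(0,x_2)-T(0,x_1)$ is already known to exist by Lemma~\ref{T0} and is independent of the sequence, passing to the limit along this subsequence suffices.
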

\begin{proof}
Let  $(x_1,x_2)\in \mathbb {R_+^*}^2$. We are going to write the variation of $T$ at $t>0$ between $x_1$ and $x_2$, with the idea to make $t$ go to zero:
\begin{align*}
T(t,x_2)-T(t,x_1) =& \int_{x_1}^{x_2} T_x(t,s)ds = \int_{x_1}^{x_2}\Re(\overline \psi N)(t,s)ds \\
=& \Re \int_{x_1}^{x_2} \frac{e^{-i\frac{s^2}{4t}}}{\sqrt t} (u\left(\frac1t,\frac st\right) +\alpha) N(t,s)ds \\
=&\Re  \left[ e^{-i\frac{s^2}{4t}}\frac{2\sqrt t}{is} \alpha N(t,s)  \right]_{x_1}^{x_2} + \Re \int_{x_1}^{x_2}  e^{-i\frac{s^2}{4t}}\frac{2\sqrt t}{is^2} \alpha N(t,s) ds \\
&+ \Re \int_{x_1}^{x_2}  e^{-i\frac{s^2}{4t}}\frac{2}{is^2}e^{i\frac{s^2}{4t}}\alpha^2 T(t,s)ds+ \Re \int_{x_1}^{x_2} e^{-i\frac{s^2}{4t}}\frac{2}{is^2}e^{i\frac{s^2}{4t}} \overline{u}\left(\frac1t,\frac st\right)T(t,s)ds\\
&+ \Re \int_{x_1}^{x_2} \frac{e^{-i\frac{s^2}{4t}}}{\sqrt t} u\left(\frac1t,\frac st\right)N(t,s)ds.
\end{align*}
The last term will provide us the differential equation that we are looking for. The term in $\alpha^2$ vanishes since it is an imaginary term inside the $\Re$ operator. All the other termes go to zero with $t$ thanks to Corollary \ref{bornu}. \\
Now, recall that $u\left(\frac1t,\frac xt\right) = e^{i\frac1t\partial^2_x} u_+\left(\frac xt\right) + r\left(\frac1t,\frac xt\right) $.  If we write:
\[
 e^{i\frac 1t \partial_x^2}u_+\left(\frac xt \right) =  \frac{e^{i\frac{x^2}{4t}}}{\sqrt{\frac it}}\int e^{-i\frac{xy}2} e^{i\frac{y^2}4t}u_+(y)dy,
\]
we have:
\[
 \frac{e^{-i\frac{x^2}{4t}}}{\sqrt t}u\left(\frac1t,\frac xt \right)  =  \frac{1}{\sqrt i} \int e^{-i\frac{xy}2} e^{i\frac{y^2}4t}u_+(y)dy + \frac{e^{-i\frac{x^2}{4t}}}{\sqrt t} r\left(\frac1t,\frac xt\right) \underset{t\to 0}{\longrightarrow}  \frac1{\sqrt i}\widehat{u_+}\left(\frac x2\right) ,
\]
since $\|r\left(\frac1t,\frac xt\right)\|_{L^\infty} \leq t^{\frac34}$. Note that in \cite{banica2015initial}, $r$ decays like $t^{\frac14}$ so the present argument is not enough.\\
Then, let us consider $(t_n)_{n\in\mathbb Z}$ such that $\forall n\in\mathbb N,\quad e^{i \alpha^2\log \sqrt {t_n}}=1$ and $t_n \underset{n\to \infty}{\longrightarrow} 0$,
\[
 N(t_n,x) =e^{-i\phi(t_n,x)}\tilde N(t_n,x)  = e^{-i \alpha^2 \log \frac{|x|}{\sqrt t_n}}\tilde N(t_n,x) \underset{n\to \infty}{\longrightarrow} e^{-i \alpha^2 \log |x|}\tilde N(0,x),
\]
so by multiplying the limits:
\[
\Re  \frac{e^{-i\frac{x^2}{4t_n}}}{\sqrt {t_n}}u\left(\frac1{t_n},\frac x{t_n} \right)  N(t_n,x)  \underset{n\to \infty}{\longrightarrow}  \Re \frac1{\sqrt i}\widehat{u_+}\left(\frac x2\right) e^{-i \alpha^2 \log |x|}\tilde N(0,x),
\]
and by dominated convergence:
\[ 
\Re \int_{x_1}^{x_2} e^{-i\frac{s^2}{4t_n}} \frac{u\left(\frac1{t_n},\frac x{t_n}\right) }{\sqrt {t_n}} N(t_n,s)ds \underset{n\to \infty}{\longrightarrow}  \Re \int_{x_1}^{x_2} \frac1{\sqrt i}\widehat{u_+}\left(\frac x2\right) e^{-i \alpha^2 \log |x|}\tilde N(0,x) .
\] 
To sum up, we proved that:
\[
T(t_n,x_2)-T(t_n,x_1) \underset{n\to \infty}{\longrightarrow}  \Re \int_{x_1}^{x_2} \frac1{\sqrt i}\widehat{u_+}\left(\frac x2\right) e^{-i \alpha^2 \log |x|}\tilde N(0,x)dx,
\]
and the conclusion of the lemma is obtained by taking $x_1=x$, $x_2=x+h$, dividing by $h$, using Lemma \ref{T0} and chosing $n$ large with respect to $h$. 
\end{proof}
\begin{lemma}[Properties of $ {\tilde N}_{|t=0}$]\label{CIN}
For $x \neq0$, we have:
\[
\tilde N_x(0,x)=\lim_{t\rightarrow 0} \tilde N_x(t,x) = \frac1{\sqrt i}\overline{\widehat{u_+}\left(\frac s2\right) e^{-i \alpha^2\log |x|} }T(0,s).
\]
\end{lemma}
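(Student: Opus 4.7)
The plan is to mirror the argument of Lemma~\ref{CIT}, with the rôle of the $\Re$ projection played by the $|x|$-dependent phase $\phi$. Fix $0<x_1<x_2$ (the case $x_1<x_2<0$ is symmetric). I would first use the parallel frame equations~\eqref{frame} together with $\phi_x=\alpha^2/s$ (valid for $s\neq 0$) to compute
\[
\tilde N_x(t,s)=e^{i\phi}\bigl(N_x+i\phi_x N\bigr)=-e^{i\phi}\psi\,T+\frac{i\alpha^2}{s}\tilde N,
\]
and then integrate from $x_1$ to $x_2$ after inserting the explicit form of $\psi$ from~\eqref{solnls}. This decomposes $\tilde N(t,x_2)-\tilde N(t,x_1)$ as $I_\alpha(t)+I_u(t)+I_\phi(t)$, where $I_\alpha$ is the oscillatory $\alpha$-contribution, $I_u$ is the perturbative $\bar u$-contribution, and $I_\phi=\int_{x_1}^{x_2}(i\alpha^2/s)\tilde N\,ds$ is the phase correction.

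The core of the argument is to show that $I_\alpha+I_\phi\to 0$. An integration by parts in $I_\alpha$ against the oscillating factor $e^{is^2/(4t)}$ produces a boundary term and several interior terms. The boundary term, together with the pieces coming from differentiating $1/s$ or $e^{i\phi}$, all carry an extra $\sqrt t$ and vanish as $t\to 0^+$. The remaining term, in which $T$ itself is differentiated, is handled via $T_s=\tfrac12(\bar\psi N+\psi\bar N)$: the $\bar\psi N$ half, after the exponentials cancel and $e^{i\phi}N=\tilde N$ is used, yields exactly $\int_{x_1}^{x_2}\bigl(\alpha^2/(is)\bigr)\tilde N\,ds=-I_\phi$, delivering the sought cancellation. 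The $\psi\bar N$ half is doubly oscillating with phase $e^{is^2/(2t)}$ and is eliminated by a second integration by parts, giving a remainder of size $t/x_1^2$. All $u$-dependent pieces appearing along the way are controlled by the bounds of Corollary~\ref{bornu}.

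For $I_u$, the plan is to decompose $u=e^{it\partial_x^2}u_++r$ as in~\eqref{constru}. The remainder contributes $O(t^{3/4})$ via Corollary~\ref{bornu}, while for the linear part, writing $e^{i/t\partial_x^2}u_+(s/t)$ as an oscillatory integral and conjugating the limit identity established in Lemma~\ref{CIT}, one obtains that $\frac{e^{is^2/(4t)}}{\sqrt t}\,\overline{u(1/t,s/t)}$ converges pointwise to the complex conjugate of $\tfrac{1}{\sqrt i}\hat u_+(s/2)$. The weighted bound $(1+s^2)u_+\in L^\infty$ from Lemma~\ref{ctau} supplies a dominating function. Along a sequence $t_n\to 0^+$ chosen so that $e^{i\alpha^2\log\sqrt{t_n}}=1$, one has $e^{i\phi(t_n,s)}=e^{i\alpha^2\log|s|}$, and Lemmas~\ref{T0}~and~\ref{N0} allow passing $T(t_n,s)\to T(0,s)$ and $\tilde N(t_n,s)\to\tilde N(0,s)$ under the integral. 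Combining yields an integral identity for $\tilde N(0,x_2)-\tilde N(0,x_1)$; setting $x_1=x$, $x_2=x+h$, dividing by $h$, and letting $h\to 0$ while choosing $n$ large relative to $h$ (exactly as at the end of Lemma~\ref{CIT}) recovers the pointwise formula for $\tilde N_x(0,x)$.

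The hard part will be the cancellation $I_\alpha+I_\phi\to 0$. Without the $s$-dependent part of $\phi$, no cancellation would occur and the troublesome $\alpha^2/s$ contribution would not die, since, unlike in Lemma~\ref{CIT}, there is no $\Re$ projection available to absorb it. This is precisely the reason the factor $|x|$ was inserted in the definition of $\phi$, as anticipated in the remark following Lemma~\ref{N0}.
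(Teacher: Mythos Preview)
Your proposal is correct and follows essentially the same route as the paper's proof: integrate $\tilde N_x=(-\psi T+i\alpha^2 s^{-1}N)e^{i\phi}$ from $x_1$ to $x_2$, integrate the $\alpha$-piece by parts, expand $T_s=\Re(\bar\psi N)$ so that the non-oscillatory $\alpha^2/(is)$ contribution cancels exactly the phase term $I_\phi$, kill the doubly-oscillating $e^{is^2/(2t)}\bar N$ piece with a second integration by parts, and pass to the limit in the $\bar u$-integral along $t_n$ with $e^{i\alpha^2\log\sqrt{t_n}}=1$ via dominated convergence. Your explicit justification of the dominating function through $(1+s^2)u_+\in L^\infty$ is in fact slightly more careful than the paper's treatment.
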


\begin{proof}
Let $(x_1,x_2)\in\mathbb {R_+^*}^2$, we write:
\begin{align*}
 \tilde N(t,x_2)-  \tilde N (t,x_1)=& \int_{x_1}^{x_2}  {\tilde N}_x(t,s)ds = \int_{x_1}^{x_2} (-\psi T +i \frac{\alpha^2}{s} N)e^{i\phi} ds.
\end{align*}
The term produced by the phase will help removing an otherwise non vanishing term, so we start by looking at the integral of $N_x$:
\begin{small}
\begin{align*}
\int_{x_1}^{x_2}\psi(t,s)T(t,s)e^{i\phi}ds = & \int_{x_1}^{x_2}  \frac{e^{i\frac{s^2}{4t}}}{\sqrt t} \alpha  T(t,s) e^{i\phi}ds +\int_{x_1}^{x_2}\frac{ e^{i\frac{s^2}{4t}}}{\sqrt t} \overline u\left(\frac1t,\frac st\right)e^{i\phi} T(t,s)ds\\
=&\left[ e^{i\frac{s^2}{4t}}\frac{2\sqrt t}{is}\alpha  T(t,s) e^{i\phi} \right]_{x_1}^{x_2} +  \int_{x_1}^{x_2}    e^{i\frac{s^2}{4t}}\frac{2\sqrt t}{is^2} \alpha  T(t,s) e^{i\phi} ds \\
&- \int_{x_1}^{x_2}   e^{i\frac{s^2}{4t}}\frac{2\sqrt t}{is}   \alpha  T(t,s) i \frac{\alpha^2}{s} e^{i\phi}ds   -  \int_{x_1}^{x_2}    e^{i\frac{s^2}{4t}}\frac{2\sqrt t}{is}   \alpha T_s(t,s)e^{i\phi} ds\\
&+\int_{x_1}^{x_2}\frac{ e^{i\frac{s^2}{4t}}}{\sqrt t} \overline u\left(\frac1t,\frac xt\right) T(t,s)e^{i\phi}ds.
\end{align*}
\end{small}
As with $T$, we will treat the term with $u$ at the end, first we have to make sure that the $T_s$ term goes to zero with $t$, using that $T_s=\Re(\overline \psi N)$:

\begin{align*}
& \int_{x_1}^{x_2}    e^{i\frac{s^2}{4t}}\frac{2\sqrt t}{is}   \alpha T_s(t,s)e^{i\phi} ds= \int_{x_1}^{x_2}    e^{i\frac{s^2}{4t}}\frac{2\sqrt t}{is}   \alpha  \frac{e^{i\frac{s^2}{4t}}}{\sqrt t} \overline u\left(\frac1t,\frac st\right) \overline N(t,s) e^{i\phi}ds\\
&+\int_{x_1}^{x_2}  e^{2i\frac{s^2}{4t}}\frac{1}{is}   \alpha ^2e^{i\phi} \overline N(t,s)ds +  \int_{x_1}^{x_2}    \frac{ 1}{is}   \alpha^2   N(t,s) e^{i\phi}ds\\
&+\int_{x_1}^{x_2}    e^{i\frac{s^2}{4t}}\frac{1}{is}   \alpha  e^{-i\frac{s^2}{4t}}  u\left(\frac1t,\frac st\right)  N(t,s) e^{i\phi}ds.\\
 \end{align*}
The first term is treated with Cauchy Schwarz, as well as the fourth. The second tends to zero with an IBP and the third  is canceled by the phase.\\
We shall now obtain the differential equation verified by $\tilde N$. Again, using $(t_n)_{n\in\mathbb Z}$ such that $e^{i \alpha^2\log \sqrt {t_n}}=1$ and $t_n \underset{n\to \infty}{\longrightarrow} 0$,
\[
e^{i\phi(t_n,x) }T(t_n,x)  \underset{n\to \infty}{\longrightarrow} e^{i \alpha^2\log |x|} T(0,x),
\]
and by multiplying the limits under the integral we write:
\[
\int_{x_1}^{x_2}\frac{ e^{i\frac{s^2}{4t_n}}}{\sqrt {t_n}} \overline u\left(\frac1{t_n},\frac s{t_n}\right)e^{i\phi} T(t,s) ds \underset{n\to \infty}{\longrightarrow} \int_{x_1}^{x_2} \frac1{\sqrt i}\overline{\widehat{u_+}}\left(\frac s2\right) e^{i \alpha^2\log |x|} T(0,s).
\]
Hence:
\[
 \tilde N (t_n,x_2)- \tilde N(t_n,x_1) \underset{n\to \infty}{\longrightarrow} -\int_{x_1}^{x_2} \frac1{\sqrt i}\overline{\widehat{u_+}\left(\frac s2\right) e^{-i \alpha^2\log |s|}} T(0,s)ds,
\]
and the conclusion of the lemma is obtained by taking $x_1=x$, $x_2=x+h$, dividing by $h$, using Lemma \ref{N0} and chosing $n$ large with respect to $h$. 
\end{proof}
\subsection{Description of the angles via self-similar paths} \label{description}\leavevmode\\
For the description of the angles, we will follow the same proof as for Proposition 5.1 of \cite{banica13}. For sake of completeness, we recall here the proof. As recalled in the introduction,  we denote by $A^\pm_{\alpha}\in\mathbb S^2$ the directions of the corner generated at time $t=0$ by the canonical self-similar solution $\chi_{\alpha}(t,x)$ of the binormal flow of curvature $\frac{\alpha}{\sqrt t}$:
\[
A^\pm_{\alpha}:=\partial_x \chi_{\alpha}(0,0^\pm).
\]
The frame of the profile $\chi(1)$ satisfies the system:
\begin{equation}\label{coin}
\left\{ 
\begin{array}{cc}
 \partial_xT_{\alpha}(1,x)= &  \Re(|\alpha  e^{-i\frac{x^2}4}N_{\alpha}(1,x)),    \\
 \partial_xN_{\alpha}(1,x)= &   -\alpha e^{i\frac{x^2}4}T_{\alpha}(1,x),
\end{array}
\right.
\end{equation}
and for $x\rightarrow\pm\infty$, there exists $B^\pm_{\alpha} \perp A^\pm_{\alpha}$, with $\Re (B^\pm_{\alpha}),\Im(A^\pm_{\alpha})\in \mathbb S^2$ such that:
\[
T_{\alpha}(1,x)=A^\pm_{\alpha} + \mathcal O(\frac1x) \quad \text{and}\quad e^{i\alpha^2\log|x|}N_{\alpha}(1,x)=B^\pm_{\alpha}+\mathcal O(\frac 1x).
\] 
\begin{lemma}[Self similar paths ]\label{auto}
Let $t_n$ be a sequence of positive times converging to zero. Up to a subsequence, there exists for all $x\in\mathbb R$ a limit given by:
\[
(T_*(x),N_*(x)) =\lim_{t\rightarrow 0} (T(t_n,x\sqrt{t_n}), N(t_n, x\sqrt{t_n})),
\]
such that $(T_*, N_*(x))$ satisfies system \eqref{coin} in the strong sense.\\
Then, there exists a unique rotation $\Theta $, such that, for $x\rightarrow \pm\infty$:
\[
T_*(x)=\Theta  (A^\pm_{\alpha})+\mathcal O(\frac1{|x|}),\quad N_*(x)=\Theta (B^\pm_{\alpha})+\mathcal O(\frac1{|x|}).
\]
\end{lemma}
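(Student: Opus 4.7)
The strategy is compactness along the self-similar rays combined with uniqueness of the linear system \eqref{coin}: I rescale the frame along $x = y\sqrt t$ so that, as $t \to 0$, the resulting ODE in $y$ becomes precisely the profile system \eqref{coin}, and I then identify the limit with a rigid rotation of $(T_\alpha(1,\cdot), N_\alpha(1,\cdot))$.

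Concretely, I set $\tilde T_t(y) := T(t, y\sqrt t)$ and $\tilde N_t(y) := N(t, y\sqrt t)$. Differentiating in $y$ and using \eqref{frame},
\[
\partial_y \tilde T_t(y) = \Re\bigl(\sqrt t\, \overline\psi(t, y\sqrt t)\, \tilde N_t(y)\bigr), \qquad \partial_y \tilde N_t(y) = -\sqrt t\, \psi(t, y\sqrt t)\, \tilde T_t(y).
\]
From \eqref{solnls} and the uniform bound $|u(1/t, \cdot)| \leq t^{1/2}$ of Corollary \ref{bornu} I obtain $\sqrt t\, \psi(t, y\sqrt t) = e^{iy^2/4}(\alpha + \overline u(1/t, y/\sqrt t)) \to \alpha e^{iy^2/4}$ uniformly in $y \in \mathbb R$. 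In particular, $\{(\tilde T_{t_n}, \tilde N_{t_n})\}_n$ is uniformly bounded ($|\tilde T_t| = 1$, $|\tilde N_t| = \sqrt 2$) and uniformly Lipschitz on $\mathbb R$, with Lipschitz constant $|\alpha|$ plus a term vanishing as $t \to 0$.

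By Arzel\`a--Ascoli on each compact interval and a diagonal extraction, I obtain a subsequence (still denoted $t_n$) along which $(\tilde T_{t_n}, \tilde N_{t_n})$ converges to some $(T_*, N_*)$ uniformly on compact subsets of $\mathbb R$. Passing to the limit in the integral form of the ODEs above by dominated convergence, $(T_*, N_*)$ satisfies \eqref{coin} in integral form; the continuity of the right-hand side then upgrades this to the strong sense. The parallel-frame identities $T \perp \Re N$, $\Re N \perp \Im N$, $|\Re N| = |\Im N| = 1$ are preserved in the limit, so $(T_*(0), \Re N_*(0), \Im N_*(0))$ is an orthonormal basis of $\mathbb R^3$.

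The canonical profile $(T_\alpha(1,\cdot), N_\alpha(1,\cdot))$ likewise solves \eqref{coin} with an orthonormal parallel frame at $y=0$. Let $\Theta \in SO(3)$ be the unique rotation sending that frame to $(T_*(0), \Re N_*(0), \Im N_*(0))$, and extend it to $\mathbb C^3$ by complex linearity; then $(\Theta T_\alpha(1,\cdot), \Theta N_\alpha(1,\cdot))$ and $(T_*, N_*)$ solve the same linear system \eqref{coin} with identical data at $y=0$, so Cauchy--Lipschitz uniqueness yields $T_* = \Theta\, T_\alpha(1,\cdot)$ and $N_* = \Theta\, N_\alpha(1,\cdot)$ on all of $\mathbb R$. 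The asymptotics recalled just before the lemma then transfer through the isometry $\Theta$ to give the announced estimates, with $\Theta$ unique by construction. The technical point that seems trickiest is the bookkeeping of the $e^{-i\alpha^2\log|y|}$ phase appearing when comparing $N_*$ at infinity with $B^\pm_\alpha$; modulo that, the argument is simply compactness plus ODE uniqueness.
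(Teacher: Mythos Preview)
Your proposal is correct and follows essentially the same approach as the paper: rescale along self-similar rays, obtain equicontinuity from the uniform bound on $\sqrt t\,\psi(t,y\sqrt t)$, apply Arzel\`a--Ascoli, and identify the limit via ODE uniqueness with a rotation of the canonical profile $(T_\alpha(1,\cdot),N_\alpha(1,\cdot))$. The only cosmetic difference is that the paper invokes $u\in L^4((1,\infty),L^\infty)$ to pass to a subsequence along which $\|u(1/t_n)\|_{L^\infty}\to 0$, whereas you use the uniform bound $|u(1/t,\cdot)|\lesssim t^{1/2}$ from Corollary~\ref{bornu} directly---your route is in fact the more natural one here given the estimates already established.
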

\begin{proof}
Let $(t_n)_{n\in\mathbb N}\in \mathbb R_+^{\mathbb N}$ a sequence of positive times converging to $0$. As explained in \cite{banica13}, $u\in L^4((1,\infty),L^\infty)$ so we can chose $(t_n)_{n\in\mathbb N}$ such that $\|u(1/t_n)\|_{L^\infty}$ goes to zero.\\
We now naturally define the following sequences:
\[
\forall n\in \mathbb N \quad (T_n,N_n) = (T(t_n,x\sqrt{t_n}), N(t_n, x\sqrt{t_n})).
\]
Since $\|T\|_{L^\infty}\leq 1$ and $\|N\|_{L^\infty}\leq 2$ it is obvious that those sequences are bounded. Let us prove their equicontinuity.\\
For all $n\in \mathbb N,\quad T_n$ is derivable and using that $T_x = \Re(\overline\psi N)$ and $N_x=-\psi T$,
\[
T_n'(x) = \sqrt {t_n} \Re (\overline \psi N)(t_n, x\sqrt{t_n})=  \Re[ \alpha e^{-i\frac{x^2}4}N(t_n,x\sqrt{t_n})] + o(1)N_n(x).
\]
Similarly, for all $x\in\mathbb R$,
\[
N_n'(x)=  \sqrt {t_n}  (-\psi N)(t_n, x\sqrt{t_n})  =-\alpha e^{i\frac{x^2}4}T(t_n,x\sqrt{t_n})+ o(1)T_n(x).
\]
Sequences $(T_n', N_n')$ are uniformly bounded, so $(T_n,N_n)$ are equicontinuous.\\
By d'Arzela-Ascoli theorem on $\mathcal T = \{T_n,n\in\mathbb N\}$ and $\mathcal N =\{N_n,n\in\mathbb N\}$, there exists a subsequence of $(T_n,N_n)$, converging toward $(T_*(x),N_*(x))$. For convenience, we will not write the extractice.\\
As the coefficients involved in the ODE are analytic, we conclude that $(T_*,  N_*(x))$ satisfies system \eqref{coin} in the strong sense, as $(T_{\alpha}(x),N_{\alpha}(x))$.\\
Therefore, there exists an unique rotation $\Theta$ such that 
\[
\left\{\begin{array}{rl} 
T_*(x)=&\Theta(T_{\alpha}(x)),\\ 
\Re(N_*(x))=&\Theta(\Re(N_{\alpha}(x))),\\  
 \Im(N_*(x))=&\Theta(\Im(N_{\alpha}(x))). \end{array} \right. 
\] 
So we conclude that for $x\rightarrow \pm\infty$:
\[
T_*(x)=\Theta (A^\pm_{\alpha})+\mathcal O(\frac1{|x|}),\quad N_*(x)=\Theta(B^\pm_{\alpha})+\mathcal O(\frac1{|x|}).
\]
\end{proof}
\begin{lemma}[Description of the singularity]
We have 
\[
T(0,0^\pm)=\Theta(A_{\alpha}^\pm) \quad\text{and}\quad  e^{i\alpha^2\log|x|} \tilde N (0,0^\pm) = \Theta(B_{\alpha}^\pm),
\]
where $\Theta$ has been introduced in Lemma \ref{auto}.
\end{lemma}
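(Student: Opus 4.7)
My plan is to combine a triangle inequality with three ingredients already available in the paper: the self-similar compactness from Lemma \ref{auto}, the quantitative convergence rates of Lemmas \ref{T0} and \ref{N0}, and the one-sided continuity of $T(0,\cdot)$ and $\tilde N(0,\cdot)$ at $0^\pm$. The continuity follows from the ODE description in Lemmas \ref{CIT}--\ref{CIN}: its right-hand side is uniformly bounded on any neighbourhood of $0$ because $\|\widehat{u_+}\|_\infty \le \|u_+\|_{L^1}$ is finite by Lemma \ref{ctau}, so $T(0, \cdot)$ and $\tilde N(0, \cdot)$ are Lipschitz near the origin and admit one-sided traces $T(0,0^\pm)$ and $\tilde N(0,0^\pm)$.

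Next I would extract from Lemma \ref{auto} a subsequence $(t_n) \to 0$ along which $T(t_n, x\sqrt{t_n}) \to T_*(x)$ and $N(t_n, x\sqrt{t_n}) \to N_*(x)$ for every $x$, and further refine it so that $e^{i\alpha^2 \log\sqrt{t_n}} = 1$; the identity $\tilde N(t,y) = e^{i\alpha^2 \log(|y|/\sqrt t)} N(t,y)$ then yields
\[
\tilde N(t_n, x\sqrt{t_n}) = e^{i\alpha^2 \log|x|}\, N(t_n, x\sqrt{t_n}) \underset{n \to \infty}{\longrightarrow} e^{i\alpha^2 \log|x|}\, N_*(x).
\]
For a fixed $X > 0$ I would decompose $T(0, 0^+) - \Theta(A_\alpha^+)$ into four pieces: the continuity gap $T(0, 0^+) - T(0, X\sqrt{t_n})$, the trace gap $T(0, X\sqrt{t_n}) - T(t_n, X\sqrt{t_n})$ controlled by Lemma \ref{T0} at $y = X\sqrt{t_n}$ as $X t_n^{3/4} + t_n^{3/4} + 1/X$, the self-similar gap $T(t_n, X\sqrt{t_n}) - T_*(X)$ vanishing by Lemma \ref{auto}, and the profile gap $|T_*(X) - \Theta(A_\alpha^+)| = O(1/X)$ again from Lemma \ref{auto}. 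Sending $n \to \infty$ first at fixed $X$ leaves an error $O(1/X)$, and then sending $X \to +\infty$ forces $T(0, 0^+) = \Theta(A_\alpha^+)$. The same template with Lemma \ref{N0} and the asymptotic $e^{i\alpha^2 \log|x|} N_*(x) \to \Theta(B_\alpha^+)$ from Lemma \ref{auto} yields the $\tilde N$ identity, and rerunning the argument with $X \to -\infty$ delivers the $0^-$ statements.

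The main technical hurdle is the management of the oscillating phase along the parabolic scaling $y = x\sqrt{t_n}$: without the additional refinement $e^{i\alpha^2 \log\sqrt{t_n}} = 1$ on the subsequence, the prefactor in $\tilde N(t_n, x\sqrt{t_n})$ does not stabilise as $n \to \infty$ and no meaningful comparison with the asymptotics of the self-similar profile would be possible. A secondary verification is that the $O(1/X)$ rates supplied by Lemma \ref{auto} (for $|T_*(X) - \Theta(A_\alpha^+)|$ and for $|e^{i\alpha^2\log X} N_*(X) - \Theta(B_\alpha^+)|$) genuinely match the residuals coming from the trace estimates, so that the final limit $X \to \infty$ pinches the equalities without leaving any gap.
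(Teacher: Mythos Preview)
Your approach is essentially the paper's own: the same four-term triangle decomposition, the same invocation of Lemmas \ref{auto}, \ref{T0}/\ref{N0} and \ref{CIT}/\ref{CIN}, and the same phase-normalising subsequence $e^{i\alpha^2\log\sqrt{t_n}}=1$. One small slip to fix: you cannot ``further refine'' the Arzel\`a--Ascoli subsequence to land in the discrete set $\{t:\alpha^2\log\sqrt t\in 2\pi\mathbb Z\}$; instead, start with a sequence $t_n\to 0$ already satisfying the phase condition and then extract via Lemma \ref{auto}---the paper does it in this order.
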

The proof of this lemma uses all we did in the previous section concerning the limit of vectors $\tilde N$ and $T$.
\begin{proof}
Let $\varepsilon>0$. The main idea of this proof is to write 
\begin{align*}
|T(0,0^+)-\Theta(A_{\alpha}^+)| \leq &|T(0,0^+) - T(0,x\sqrt{t_n})|+|T(0,x\sqrt{t_n}) - T(t_n,x\sqrt{t_n})|\\
&+|T(t_n,x\sqrt{t_n}) - T_*(x)| + |T_*(x)-\Theta(A_{\alpha}^+)|.
\end{align*} 
First, we chose $x$ big enough, such that $|T_*(x)-\Theta(A_{\alpha}^+)| \leq \frac{\varepsilon}4$, thanks to Lemma \ref{auto}. \\
Then we chose $n$ big enough, such that $|T(t_n,x\sqrt{t_n}) - T_*(x)| \leq \frac{\varepsilon}4$ thanks to convergence, such that $|T(0,x\sqrt{t_n}) - T(t_n,x\sqrt{t_n})|  \leq \frac{\varepsilon}4$ thanks to Lemma \ref{T0} and finally such that $|T(0,0^+) - T(0,x\sqrt{t_n})| \leq \frac{\varepsilon}4$, using Lemma \ref{CIT}:
\[
|T(0,0^+) - T(0,x\sqrt{t_n})| \leq \|T_x\|_\infty x\sqrt {t_n} \leq C(u_+)x\sqrt {t_n}.
\]
So we have $|T(0,0^+)-\Theta(A_{\alpha}^+)| \leq  \varepsilon$, i.e.
\[
T(0,0^+)=\Theta(A_{\alpha}^+).
\]
Similarly, for $x<0$ we prove that $T(0,0^-)=\Theta(A_{\alpha}^-)$.\par
For $\tilde N$ we follow the same path, taking care to handle the phases. 
For $(t_n)_{n\in\mathbb N} \in \mathbb R_+^{\mathbb N}$ converging to zero, such that 
\[
\exp(i\alpha^2\log\sqrt{t_n})=1,
\]
we have:
\begin{align*}
&|\Theta(B^+_{\alpha})-   \tilde N (0,k+)|\\
 \leq &|\Theta(B^+_{\alpha}) - e^{i\alpha^2\log|x|} N_*(x)| +| e^{i\alpha^2\log|x|} N_*(x)  - e^{i\alpha^2\log|x|} N(t_n,x\sqrt t_n)|\\
&+|e^{i\alpha^2\log|x|} N(t_n, x\sqrt t_n)  - e^{i\alpha^2 \ln \frac {|x\sqrt t_n|}{\sqrt t_n}} N(t_n,x\sqrt t_n) |+|   \tilde N (t_n,x\sqrt t_n) - \tilde N (0,x\sqrt t_n) | \\
&+ |  \tilde N (0,x\sqrt t_n) -  \tilde N (0,k+)|.
\end{align*}
The first term is small for $x$ big enough thanks to Lemma \ref{auto}. The second is small for $n$ big enough thanks to Lemma \ref{auto}. The third term is zero, the fourth term is small when $t_n$ is small enough using Lemma \ref{N0}. Finally, the last term is controlled by $C(u)x\sqrt t_n$ due to Lemma \ref{CIN}, and we have the desired result.
\end{proof}
\section{Recovering the initial curve $\chi_0$} \label{fin}
In this section, we prove that the curve $\chi$ is equal to $\chi_0$ at time zero, combining the results of the two previous parts and the choice of $u_+$ in the introduction.\\
 The system that verify  $N$ and $T$ at time zero is the following:
\[
\left\{ 
\begin{array}{cc}
T_x(0,x)=  &\Re  \frac1{\sqrt i}\widehat{u_+}\left(\frac x2\right) e^{-i \alpha^2 \log |x|}\tilde N(0,x), \\
 \tilde N_x(0,x)=  &  - \frac1{\sqrt i}\overline{\widehat{u_+}\left(\frac x2\right) e^{-i \alpha^2\log |x|}} T(0,x),
\end{array}
\right. \forall x\neq 0,
\]
with initial value given by
\[
T(0,0^\pm)=\Theta(A_{\alpha}^\pm) \quad\text{and}\quad  e^{i\alpha^2\log|x|} \tilde N (0,0^\pm) = \Theta(B_{\alpha}^\pm).
\]
Recalling the definition of $u_+$ given by \eqref{defup}, $T(0)$ and $\tilde N(0)$ satisfy the same Cauchy system \eqref{ci} as $T_0$ and $N_0$, hence $\chi(0)=\chi_0$.

Finally, we are left to prove the convergence rate \eqref{convchi} of $\chi(t,x)$ as $t$ goes to zero. Since $\chi_t(t,x)=c(t,x)$ and $c(t,x)=|\psi(t,x)|\leq \frac C{\sqrt t}$, we have:
\[
|\chi(t_2,x)-\chi(t_1,x)|\leq \int_{t_1}^{t_2} \frac C{\sqrt t}dt \leq C \sqrt t_2,
\]
and Theorem \ref{maintheo} is proven.

\section*{Acknowledgments}
This paper has been written during my PhD under the supervision of Valeria Banica and Nicolas Burq, I would like to thank them for their precious help and discussions.

\bibliographystyle{aaai-named}

\end{document}